\documentclass{yearbook23}

\usepackage{tikz}
\usepackage{pgf} 
\usetikzlibrary{arrows}
\usetikzlibrary{patterns,automata,arrows,shapes,snakes,topaths,trees,backgrounds,positioning,through,calc,arrows.meta}

\newcommand{\lat}{\mathfrak{L}}
\newcommand{\comp}{\vartriangleleft}
\newcommand{\compflip}{\vartriangleright}

\xdefinecolor{darkgreen}{rgb}{0,0.75,0}

\usepackage{hyperref}
\hypersetup{
  colorlinks=false,
  linkcolor=blue,	
  citecolor=blue,
  urlcolor=blue
}

\usepackage{multicol}

\usepackage{microtype}

\begin{document}

\LongTitle{Preconditionals}
\ShortTitle{Preconditionals}

\AuthorA{Wesley H. Holliday}{University of California, Berkeley}{USA}{wesholliday@berkeley.edu}
\AuthorAThanks{Thanks to Yifeng Ding, Matt Mandelkern, Guillaume Massas, Alex Rathke, and Snow Zhang for helpful comments.}


\Abstract{In recent work, we introduced a new semantics for conditionals, covering a large class of what we call \textit{preconditionals}. In this paper, we undertake an axiomatic study of preconditionals and subclasses of preconditionals. We then prove that any bounded lattice equipped with a preconditional can be represented by a relational structure, suitably topologized, yielding a single relational semantics for conditional logics normally treated by different semantics, as well as generalizing beyond those semantics.}

\Keywords{conditionals, Heyting algebras, ortholattices, orthomodular lattices, Sasaki hook, indicatives, counterfactuals, flattening, relational frames}

\MakeTitlePage

\section{Introduction}\label{Intro}

Conditionals in their different flavors---material, strict, indicative, counterfactual, probabilistic, constructive, quantum, etc.---have long been of central interest in philosophical logic (see \citealt{egre2021} and references therein). In this paper, based on a talk at Logica 2023, we further investigate a new approach to conditionals introduced in our recent work on the representation of lattices with conditional operations (\citealt[\S~6]{Holliday2023}). 

We define a \textit{preconditional} $\to$ on a bounded lattice to be a binary operation satisfying five natural axioms, each of which we show to be independent of the others (Section~\ref{AxSection}). We also consider the properties of the associated negation defined by $\neg a:=a\to 0$ (Section~\ref{NegSection}). Familiar examples of bounded lattices equipped with a preconditional include Heyting algebras (Section \ref{HeytingSection}), ortholattices with the Sasaki hook (Section~\ref{SasakiSection}), and Lewis-Stalnaker-style conditional algebras satisfying the so-called flattening axiom (Section~\ref{LewisStalnakerSection}). We characterize these classes axiomatically in terms of additional independent axioms they satisfy beyond those of preconditionals.

We then show (Section \ref{Representation}) that every bounded lattice equipped with a preconditional can be represented using a relational structure $(X,\comp)$, suitably topologized. This yields a single relational semantics for conditional logics normally treated by different semantics, as well as a generalization beyond those semantics. We conclude (Section \ref{Conclusion}) with some suggested directions for further development of this approach to conditionals.

\section{Preconditionals}

\subsection{The axioms and their independence}\label{AxSection}

The definition of a preconditional from \citealt{Holliday2023} was discovered through an attempt to axiomatize the class of lattices with an implication operation amenable to a relational representation described in Section \ref{Representation}. However, here we will begin with axiomatics and turn to representation only at~the~end.

\begin{defn}\label{ImpAlg} Given a bounded lattice $L$, a \emph{preconditional} on $L$ is a binary operation $\to$  on $L$ satisfying the following for all $a,b,c\in L$:
\begin{center}
\hspace{-.15in}\begin{minipage}{1.75in}
\begin{enumerate}
\item\label{ax1}  $1\to a\leq a$;
\item\label{ax2}  $a\wedge b\leq a\to b$;
\item\label{ax3}  $ a\to b \leq a\to (a\wedge b)$;
\end{enumerate}\end{minipage}\begin{minipage}{2.25in}
\begin{enumerate}
\setcounter{enumi}{3}
\item\label{ax4} $a\to (b\wedge c)\leq a\to b$;
\item\label{ax5} $a\to ((a\wedge b)\to c)\leq (a\wedge b)\to c$.
\end{enumerate}\end{minipage}
\end{center}
\end{defn}
Arguably all of the axioms are intuitively valid for both indicative conditionals and counterfactual conditionals in natural language, but we will not make that case here. Instead, let us begin with the following easy check.

\begin{fact}\label{PreconInd} The axioms of preconditionals are mutually independent.
\end{fact}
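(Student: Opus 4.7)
The plan is to exhibit, for each axiom $i\in\{1,\dots,5\}$, a small finite bounded lattice equipped with a binary operation $\to_i$ satisfying every axiom except the $i$th. Since the axioms involve only finitely many variables and the lattices are finite, this reduces the whole fact to verifying five finite tables.

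For axioms $1$ and $2$ I work on the two-element lattice $\{0,1\}$. The constant operation $a\to b=1$ refutes axiom $1$ at $(1,0)$ and trivially satisfies axioms $2$--$5$, because every right-hand side appearing in them evaluates to $1$. Dually, $a\to b=0$ refutes axiom $2$ at $(1,1)$ and trivially satisfies the rest. For axiom $3$ I stay on $\{0,1\}$ and set $a\to b=b$: axioms $1$ and $2$ are immediate, axiom $4$ holds because $b\wedge c\leq b$, and axiom $5$ collapses to $c\leq c$; but axiom $3$ fails at $(0,1)$, where $0\to 1=1$ while $0\to(0\wedge 1)=0$.

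The main obstacle is axioms $4$ and $5$: on $\{0,1\}$, case analysis on the only free values $0\to 0$ and $0\to 1$ shows that axioms $1$--$3$ together with either of $4$ or $5$ force the other, so no two-element separating model can exist. I therefore move to the three-element chain $\{0<m<1\}$, on which axioms $1$ and $2$ together already force $1\to x=x$, leaving only the row values $0\to x$ and $m\to x$ free. For axiom $4$ I set $0\to x=m$ for all $x$, $m\to 0=1$, and $m\to m=m\to 1=m$; axioms $1$--$3$ are visible from the table, axiom $5$ reduces to inequalities of the form $m\leq m$ and $m\leq 1$, while axiom $4$ fails at $m\to(m\wedge 0)=1\not\leq m=m\to m$. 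For axiom $5$ I take the even simpler $0\to x=0$ and $m\to x=m$: every row is then monotone and bounded below by $a\wedge b$, so axioms $1$--$4$ are immediate, but axiom $5$ fails at $(a,b,c)=(m,0,0)$, where $m\to(0\to 0)=m\not\leq 0=(m\wedge 0)\to 0$.
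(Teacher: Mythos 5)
Your proof is correct and follows essentially the same strategy as the paper: the counterexamples for axioms 1--3 are identical to the paper's, and for axioms 4 and 5 you use the same three-element chain with slightly different (but equally valid) tables, plus a correct bonus observation that no two-element model can separate axioms 4 and 5. All five tables check out.
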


\begin{proof} For each axiom, we provide a lattice with a binary operation $\to$ in which the axiom does not hold but it is easy to check that the other axioms~do.

For axiom \ref{ax1}, consider the two-element lattice on $\{0,1\}$ with $\to$ defined by $a\to b=1$. Since $1\to 0=1\not\leq 0$, axiom \ref{ax1} does not hold.

For axiom \ref{ax2}, consider the lattice on $\{0,1\}$ with $\to$ defined by $a\to b=0$. Since  $1\wedge 1=1\not\leq 0=1\to 1$, axiom \ref{ax2} does not hold.

For axiom \ref{ax3}, consider the  lattice on $\{0,1\}$ with $\to$ defined by $a\to b = b$. Since $0\to 1=1\not\leq 0=0\to 0= 0\to (0\wedge 1)$, axiom \ref{ax3} does not hold. 

For axiom \ref{ax4}, consider the lattice with $\to$ on the left of Figure \ref{Ax4Fig}. Since $0\to (1\wedge 0)=0\to 0 = 1\not\leq 1/2 = 0\to 1 $, axiom \ref{ax4} does not hold.
\begin{figure}[h]
\begin{center}
\begin{minipage}{.75in}
\begin{tikzpicture}[->,>=stealth',shorten >=1pt,shorten <=1pt, auto,node
distance=2cm,semithick,every loop/.style={<-,shorten <=1pt}]
\tikzstyle{every state}=[fill=gray!20,draw=none,text=black]
\node[circle,draw=black!100,fill=black!100, label=left:$0$,inner sep=0pt,minimum size=.175cm] (0) at (0,0) {{}};
\node[circle,draw=black!100,fill=black!100, label=left:$1/2$,inner sep=0pt,minimum size=.175cm] (half) at (0,1) {{}};
\node[circle,draw=black!100,fill=black!100, label=left:$1$,inner sep=0pt,minimum size=.175cm] (1) at (0,2) {{}};
\path (0) edge[-] node {{}} (half);
\path (half) edge[-] node {{}} (1);
\end{tikzpicture} 
\end{minipage}
\begin{minipage}{1.6in}
\begin{tabular}{c|c|c|c} 
$\to$ & $0$ & $1/2$ & $1$ \\
\hline
$0$ & $1$ & $1/2$ & $1/2$ \\
$1/2$  & $0$ & $1/2$ & $1/2$ \\
$1$ & $0$ & $1/2$  & $1$ 
\end{tabular}\end{minipage}\begin{minipage}{1.6in}
\begin{tabular}{c|c|c|c} 
$\to$ & $0$ & $1/2$ & $1$ \\
\hline
$0$ & $0$ & $0$ & $0$ \\
$1/2$  & $1$ & $1$ & $1$ \\
$1$ & $0$ & $1/2$  & $1$ 
\end{tabular}\end{minipage}
\end{center}
\caption{Left and center: lattice $L$ and $\to$ demonstrating independence of axiom \ref{ax4}. Right: $\to$ demonstrating independence of axiom \ref{ax5}.}\label{Ax4Fig}
\end{figure}

Finally, for axiom \ref{ax5}, consider the lattice with $\to$ on the right of Figure \ref{Ax4Fig}. Since $1/2 \to ((1/2 \wedge 0)\to 0) = 1/2 \to (0\to 0) = 1/2 \to 0= 1\not\leq 0 = 0\to 0 = (1/2 \wedge 0)\to 0$,  axiom \ref{ax5} does not hold.\end{proof}

\subsection{Precomplementation}\label{NegSection}

The preconditional axioms settle some basic properties of the negation operation defined from $\to$ by $\neg x:=x\to 0$.

\begin{prop}\label{NegLem} Let $L$ be a bounded lattice with a preconditional $\to$. Then defining $\neg x:=x\to 0$, we have:
\begin{enumerate}
\item\label{NegLem1} $a\leq b$ implies $\neg b\leq \neg a$;
\item\label{NegLem2} $\neg 1 =0$.
\end{enumerate}
\end{prop}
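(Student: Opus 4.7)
The plan is to prove the two clauses separately, with clause (\ref{NegLem2}) being essentially immediate and clause (\ref{NegLem1}) requiring a short chain through axioms 4 and 5.

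For (\ref{NegLem2}), I would unfold $\neg 1 = 1\to 0$ and apply axiom 1 directly: $1 \to 0 \leq 0$. Since $0$ is the bottom of $L$, the reverse inequality is automatic, so $\neg 1 = 0$.

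For (\ref{NegLem1}), the plan is to assume $a\leq b$ (so $a\wedge b = a$) and produce an intermediate term $b\to \neg a$ that is simultaneously an upper bound on $\neg b$ (by axiom 4) and below $\neg a$ (by axiom 5). Specifically, instantiating axiom 5 with the triple $(b,a,0)$ yields
\[ b \to \bigl((b\wedge a)\to 0\bigr) \leq (b\wedge a)\to 0, \]
which under $a\leq b$ collapses to $b\to \neg a \leq \neg a$. Separately, instantiating axiom 4 with $(b,\neg a,0)$ gives
\[ b \to (\neg a \wedge 0) \leq b\to \neg a, \]
i.e.\ $\neg b \leq b\to \neg a$ (this step uses nothing about the relationship of $a$ and $b$). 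Chaining the two inequalities through $b\to \neg a$ delivers $\neg b \leq \neg a$.

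The only mild obstacle is spotting the intermediate term $b\to \neg a$; the axioms do not give antitonicity of $\to$ in the first coordinate in general, and one needs both a specialization of axiom 5 (to bound $b\to \neg a$ from above, which is where the hypothesis $a\leq b$ enters) and an unconditional use of axiom 4 (to bound $b\to 0$ from above by $b\to \neg a$, using only that $\neg a \wedge 0 = 0$). Once that intermediate term is in view, the argument is a two-line chain with no further bookkeeping.
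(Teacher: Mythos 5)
Your proposal is correct and is essentially the paper's own proof: both chain $\neg b \leq b\to\bigl((b\wedge a)\to 0\bigr) \leq (b\wedge a)\to 0 = \neg a$ via axiom 4 (with $c=0$) and then axiom 5, the only cosmetic difference being that you apply the collapse $b\wedge a = a$ up front rather than at the end. Part \ref{NegLem2} is likewise handled identically via axiom 1.
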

\begin{proof} For part \ref{NegLem1}, if $a\leq b$, then we have
\begin{eqnarray*}
b\to 0&\leq& b\to ((b\wedge a)\to 0) \quad \mbox{by axiom \ref{ax4} of preconditionals} \\
&\leq & (b\wedge a)\to 0 \quad\mbox{by axiom \ref{ax5} of preconditionals} \\
&\leq & a\to 0\quad\mbox{since $b\wedge a=a$ from $a\leq b$}.
\end{eqnarray*}

Part \ref{NegLem2} is immediate from axiom \ref{ax1} of preconditionals.
\end{proof}

Following \citealt{Holliday2023}, we call a unary operation $\neg$ satisfying parts \ref{NegLem1} and \ref{NegLem2} of Proposition \ref{NegLem} a \textit{precomplementation}. Given a precomplementation, we can induce a preconditional as follows---an idea to which we will return in the context of ortholattices in Section \ref{SasakiSection}.

\begin{prop}\label{PrecompToPrecond} Let $L$ be a bounded lattice equipped with a precomplementation $\neg$. Then the binary operation $\to$ defined by 
\[a\to b := \neg a\vee (a\wedge b)\]
is a preconditional.
\end{prop}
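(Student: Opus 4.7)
The plan is to verify each of the five preconditional axioms in turn for the operation $a \to b := \neg a \vee (a \wedge b)$, using only (i) the order-reversing property of $\neg$ and (ii) the identity $\neg 1 = 0$ (the two defining features of a precomplementation), together with basic lattice arithmetic.

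Axioms 1--4 should be essentially one-line lattice calculations. For axiom 1, $1 \to a = \neg 1 \vee (1 \wedge a) = 0 \vee a = a$, using $\neg 1 = 0$. Axiom 2 is immediate since $a \wedge b$ is one of the joinands in the definition of $a \to b$. For axiom 3, note that $a \wedge b = a \wedge (a \wedge b)$, so $a \to (a \wedge b) = \neg a \vee (a \wedge b) = a \to b$, and the required inequality is actually an equality. Axiom 4 follows from monotonicity: $a \wedge b \wedge c \leq a \wedge b$ gives $\neg a \vee (a \wedge b \wedge c) \leq \neg a \vee (a \wedge b)$.

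The most substantive step is axiom 5, $a \to ((a \wedge b) \to c) \leq (a \wedge b) \to c$. I would write $R := (a\wedge b) \to c = \neg(a\wedge b) \vee ((a\wedge b)\wedge c)$, so that the left side becomes $\neg a \vee (a \wedge R)$. It then suffices to show that both joinands on the left lie below $R$. For the second joinand, $a \wedge R \leq R$ is trivial. For the first, order-reversal of $\neg$ applied to $a \wedge b \leq a$ yields $\neg a \leq \neg(a \wedge b)$, and clearly $\neg(a\wedge b) \leq R$. This is the only step where the order-reversing property of $\neg$ does any real work, and it is the main place I would expect to pause and check carefully; beyond that the proof is routine.
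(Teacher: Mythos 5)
Your proof is correct and follows essentially the same route as the paper: unfold the definition, dispatch axioms 1--4 by $\neg 1 = 0$ and lattice arithmetic, and use antitonicity of $\neg$ (via $\neg a \leq \neg(a\wedge b)$) precisely and only for axiom 5. Your treatment of axiom 5 just makes explicit the joinand-by-joinand bound that the paper leaves implicit.
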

\begin{proof} Using the definition of $\to$, the axioms of preconditionals become:
\begin{enumerate}
\item\label{PrecompToPrecond1}  $\neg 1\vee (1\wedge a)\leq a$;
\item\label{PrecompToPrecond2}  $a\wedge b\leq \neg a\vee (a\wedge b)$;
\item\label{PrecompToPrecond3}  $ \neg a\vee (a\wedge b) \leq \neg a\vee (a\wedge (a\wedge b))$;
\item\label{PrecompToPrecond4}  $\neg a\vee (a\wedge (b\wedge c))\leq \neg a\vee (a\wedge b)$;
\item\label{PrecompToPrecond5}  $\neg a\vee (a\wedge (\neg(a\wedge b)\vee ((a\wedge b)\wedge c)))\leq \neg(a\wedge b)\vee ((a\wedge b)\wedge c)$.
\end{enumerate}
Axiom \ref{PrecompToPrecond1} holds given the property of precomplementations that $\neg 1=0$. Axioms \ref{PrecompToPrecond2}-\ref{PrecompToPrecond4} follow from the axioms for lattices. Axiom \ref{PrecompToPrecond5} holds given the property of precomplementations that $a\wedge b\leq a$ implies $\neg a\leq \neg (a\wedge b)$.
\end{proof}

\subsection{Heyting implication}\label{HeytingSection}

As suggested in Section \ref{Intro}, several familiar conditional operations are examples of preconditionals. Our first example is the Heyting implication in Heyting algebras. Consider the following axioms:
\begin{itemize}
\item modus ponens (MP): $a\wedge (a\to b)\leq b$;
\item weak monotonicity: $b\leq a\to b$.
\end{itemize}

\begin{fact}\label{MPWM} $\,$
\begin{enumerate}
\item\label{MPWM1} Modus ponens is independent of the axioms of preconditionals plus weak monotonicity.
\item\label{MPWM2} Weak monotonicity is independent of the axioms of preconditionals plus modus ponens.
\end{enumerate}
\end{fact}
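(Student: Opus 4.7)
The plan is to prove each independence claim by exhibiting a small counterexample. Both will live on the three-element chain $\{0,1/2,1\}$, which is the simplest lattice rich enough to separate modus ponens from weak monotonicity, and both can be obtained by a minimal perturbation of a canonical example.

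For part~\ref{MPWM1} I want $a\to b$ to be as large as possible so that weak monotonicity $b\leq a\to b$ is immediate while MP fails. A first try, $a\to b=1$ identically, breaks axiom~1, so I would patch it by keeping $1\to b=b$ and setting $a\to b=1$ whenever $a\neq 1$. Weak monotonicity is then automatic, and MP fails at $a=1/2$, $b=0$, since $1/2\wedge(1/2\to 0)=1/2\wedge 1=1/2\not\leq 0$. Verifying the five preconditional axioms should be mechanical: axiom~1 becomes $b\leq b$; axioms~2--4 reduce either to trivial inequalities against the top (when $a<1$) or to bare lattice identities (when $a=1$); and axiom~5 splits on whether $a=1$ or $a<1$, noting that in the latter case $a\wedge b<1$ as well, so both sides equal $1$.

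For part~\ref{MPWM2} I would start from the Heyting implication on the three-chain, which satisfies both MP and weak monotonicity. To destroy weak monotonicity without disturbing MP, observe that $0\wedge x=0\leq b$ automatically, so I may freely shrink the values of $0\to b$. I therefore propose setting $0\to b=0$ for every $b$ while leaving the Heyting values in place for $a\in\{1/2,1\}$. Weak monotonicity then fails at $0\to 1/2=0<1/2$, while MP is untouched.

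The main obstacle in this second construction is axiom~5, $a\to((a\wedge b)\to c)\leq(a\wedge b)\to c$, in the cases where $a\wedge b=0$. There the right-hand side collapses to $0\to c=0$, so I need $a\to(0\to c)=a\to 0$ to equal $0$ for every $a$. Luckily the original Heyting values already give $1/2\to 0=1\to 0=0$, and the modification supplies $0\to 0=0$, so the obstruction vanishes. The remaining axioms (including axiom~5 when $a\wedge b\neq 0$, which reduces to the unmodified Heyting calculation) are a routine case check.
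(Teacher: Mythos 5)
Your proposal is correct and follows the same strategy as the paper: each independence claim is witnessed by a small finite chain with an explicitly defined $\to$, and both of your tables do satisfy all five preconditional axioms together with the one retained principle while refuting the other. The paper happens to use a four-element chain for part 1 (an example it later reuses as a proto-Heyting implication, which yours is not, since $1/2\wedge(1/2\to 0)=1/2$) and the simpler operation $a\to b=a\wedge b$ on the two-element lattice for part 2, but your three-element witnesses are equally valid.
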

\begin{proof} For modus ponens, consider the lattice with $\to$ in Figure \ref{MPIndFig}. We have $b\wedge (b\to a)= b\wedge 1=b\not\leq a$, so modus ponens does not hold, but one can check that the other axioms do.

For weak monotonicity, consider the two-element lattice on $\{0,1\}$ with $\to$ defined by $a\to b=a\wedge b$. Since $1\not\leq 0=0\to 1$, weak monotonicity does not hold, but one can check that the other axioms do.\end{proof}

\begin{figure}
\begin{center}
\begin{minipage}{.75in}
\begin{tikzpicture}[->,>=stealth',shorten >=1pt,shorten <=1pt, auto,node
distance=2cm,semithick,every loop/.style={<-,shorten <=1pt}]
\tikzstyle{every state}=[fill=gray!20,draw=none,text=black]
\node[circle,draw=black!100,fill=black!100, label=left:$0$,inner sep=0pt,minimum size=.175cm] (0) at (0,0) {{}};
\node[circle,draw=black!100,fill=black!100, label=left:$a$,inner sep=0pt,minimum size=.175cm] (a) at (0,.5) {{}};
\node[circle,draw=black!100,fill=black!100, label=left:$b$,inner sep=0pt,minimum size=.175cm] (b) at (0,1) {{}};
\node[circle,draw=black!100,fill=black!100, label=left:$1$,inner sep=0pt,minimum size=.175cm] (1) at (0,1.5) {{}};
\path (0) edge[-] node {{}} (a);
\path (a) edge[-] node {{}} (b);
\path (b) edge[-] node {{}} (1);
\end{tikzpicture} 
\end{minipage}
\begin{minipage}{1.6in}
\begin{tabular}{c|c|c|c|c} 
$\to$ & $0$ & $a$ & $b$ & $1$ \\
\hline
$0$ & $1$ & $1$ & $1$ & $1$ \\
$a$  & $0$ & $1$ & $1$ & $1$ \\
$b$ & $0$ & $1$  & $1$ & $1$ \\
$1$ & $0$ & $a$  & $b$ & $1$
\end{tabular}\end{minipage}
\end{center}
\caption{$(L,\to)$ demonstrating independence of modus ponens in Fact \ref{MPWM}.\ref{MPWM1}.}\label{MPIndFig}
\end{figure}

We can characterize Heyting implications as preconditionals satisfying the two axioms above.

\begin{prop}\label{RelPseudo} For any bounded lattice $L$ and binary operation $\to$ on $L$, the following are equivalent:
\begin{enumerate}
\item\label{RelPseudo1} $\to$ is a \emph{Heyting implication}, i.e., for all $a,b,c\in L$, 
\[a\wedge b\leq c\mbox{ iff } a\leq b\to c;\]
\item\label{RelPseudo2} $\to$ is a preconditional satisfying modus ponens and \mbox{weak monotonicity};
\item\label{RelPseudo3} $\to$ satisfies axioms 3 and 4 of preconditionals, modus ponens, and weak monotonicity.
\end{enumerate}
\end{prop}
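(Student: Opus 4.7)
My plan is to close the equivalence cyclically as $\ref{RelPseudo1}\Rightarrow\ref{RelPseudo2}\Rightarrow\ref{RelPseudo3}\Rightarrow\ref{RelPseudo1}$, with the first two steps routine and the last doing the real work. The middle implication $\ref{RelPseudo2}\Rightarrow\ref{RelPseudo3}$ is immediate since (3) simply drops axioms 1, 2, and 5 from (2).

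For $\ref{RelPseudo1}\Rightarrow\ref{RelPseudo2}$, I would assume the adjunction $a\wedge b\leq c \iff a\leq b\to c$ and verify each of the seven conditions in (2) by one- or two-line applications. Modus ponens is the instance of the adjunction applied to $a\to b\leq a\to b$. Weak monotonicity follows from $a\wedge b\leq b$. Axiom 1 comes from $1\to a\leq 1\to a$; axiom 2 from $a\wedge(a\wedge b)\leq b$. Axioms 3, 4, and 5 each follow by the same pattern: use the adjunction to convert the goal $x\leq y\to z$ into $x\wedge y\leq z$, perform a trivial meet manipulation, then re-apply the adjunction in the opposite direction.

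The substantive step is $\ref{RelPseudo3}\Rightarrow\ref{RelPseudo1}$, where I must extract the full adjunction from just axioms 3, 4, modus ponens, and weak monotonicity. The right-to-left half is easy: if $a\leq b\to c$, then $a\wedge b\leq(b\to c)\wedge b\leq c$ by MP. For the left-to-right half, suppose $a\wedge b\leq c$, which I would rewrite as $(b\wedge a)\wedge c=b\wedge a$. Then the plan is to chain: $a\leq b\to a$ by weak monotonicity; $b\to a\leq b\to(b\wedge a)$ by axiom 3; rewrite $b\to(b\wedge a)$ as $b\to((b\wedge a)\wedge c)$ using the preceding equality inside the second argument of $\to$; and finally $b\to((b\wedge a)\wedge c)\leq b\to c$ by axiom 4. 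Composition yields $a\leq b\to c$. The only thing to spot is that axiom 3 is what lets one insert a $b\wedge a$ into the right argument, creating a meet that can absorb $c$ from the hypothesis and then be projected back out to $c$ via axiom 4; there is no deeper obstacle. As a side benefit, the proof shows that in the presence of MP and weak monotonicity, preconditional axioms 1, 2, and 5 are all derivable from axioms 3 and 4.
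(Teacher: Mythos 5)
Your proposal is correct and follows essentially the same route as the paper: the cycle $\ref{RelPseudo1}\Rightarrow\ref{RelPseudo2}\Rightarrow\ref{RelPseudo3}\Rightarrow\ref{RelPseudo1}$, with the key step being the chain $a\leq b\to a\leq b\to(a\wedge b)=b\to(a\wedge b\wedge c)\leq b\to c$ via weak monotonicity, axiom 3, the hypothesis $a\wedge b\leq c$, and axiom 4, exactly as in the paper. Your closing observation that axioms 1, 2, and 5 become derivable in the presence of MP and weak monotonicity is a correct corollary of the equivalence.
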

\begin{proof} The implication from \ref{RelPseudo1} to \ref{RelPseudo2} is straightforward and the implication from \ref{RelPseudo2} to \ref{RelPseudo3} is immediate.

From \ref{RelPseudo3} to \ref{RelPseudo1}, supposing $a\leq b\to c$, we have $a\wedge b \leq  (b\to c) \wedge b \leq c$ by modus ponens. Conversely, supposing $a\wedge b\leq c$, we have
\begin{eqnarray*}
a & \leq & b\to a \quad\mbox{by weak monotonicity} \\
&\leq & b\to (a\wedge b)\quad\mbox{by axiom \ref{ax3} of preconditionals}\\
&\leq & b\to (a\wedge b\wedge c)\quad\mbox{by our assumption that }a\wedge b\leq c\\
&\leq & b\to c\quad\mbox{by axiom \ref{ax4} of preconditionals}.\qedhere
\end{eqnarray*}
\end{proof}

\begin{fact} Axioms \ref{ax3} and \ref{ax4} of preconditionals, modus ponens, and weak monotonicity are mutually independent.
\end{fact}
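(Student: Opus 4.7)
The plan is to exhibit, for each of the four axioms, a bounded lattice with a binary operation $\to$ that satisfies the other three but not that one. For the independence of modus ponens and of weak monotonicity, the two examples from the proof of Fact~\ref{MPWM} already suffice, since in each case the author verified all the preconditional axioms (in particular axioms~3 and~4) together with whichever of MP or WM was not being tested.

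For the independence of axiom~3, I would take the four-element lattice $\{0,a,b,1\}$ with $a,b$ incomparable (so $a\wedge b=0$, $a\vee b=1$), and define $0\to y=1$ and $x\to y=y$ for $x\in\{a,b,1\}$. Then WM ($y\leq y$) and MP ($x\wedge y\leq y$) are immediate, and axiom~4 reduces to monotonicity of the identity in the second argument (together with the constant top row for $0\to$); all three are clear. But $a\to b=b$ while $a\to(a\wedge b)=a\to 0=0$, so axiom~3 fails.

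For the independence of axiom~4, I would take the four-element chain $0<a<b<1$ and set $0\to y=1$, $1\to y=y$, $a\to 0=0$, $a\to a=1$, $a\to b=b$, $a\to 1=1$, $b\to 0=0$, $b\to a=a$, $b\to b=1$, $b\to 1=1$. Most of these values are forced: MP pins $a\to 0=b\to 0=0$ and $b\to a=a$; WM pins $a\to 1=b\to 1=1$; and axiom~3 together with WM forces $a\to a=b\to b=1$ (since $a\to 1\leq a\to a$ by axiom~3 and $1\leq a\to 1$ by WM). The one genuinely free choice is $a\to b\in\{b,1\}$; selecting $a\to b=b$ breaks monotonicity in the second argument, so $a\to a=1\not\leq b=a\to b$ violates axiom~4, while axioms~3, MP, and WM are verified row by row.

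The main obstacle is recognizing that the three-element counterexamples from Fact~\ref{PreconInd} cannot be reused: those were only required to satisfy the other preconditional axioms, not WM, and in fact the axiom~4 example there sets $0\to 1 = 1/2\not\geq 1$. Once WM is imposed alongside MP, most entries of $\to$ on a small lattice become fixed, forcing the move to a four-element lattice and the deliberate engineering of a single irregular entry to isolate the failure of axiom~3 or~4.
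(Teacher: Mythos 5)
Your four counterexamples are all correct and the overall strategy---one algebra per axiom, reusing the Fact~\ref{MPWM} examples for MP and WM---is essentially the paper's own approach, which likewise exhibits a counterexample for each of the four conditions. The one thing to correct is your closing remark that imposing WM and MP ``forces the move to a four-element lattice'': the paper refutes axiom~3 on the two-element lattice with $a\to b=b$ (which already satisfies MP, WM, and axiom~4), and refutes axiom~4 on the three-element chain with $x\to y=1$ if $x=y$ and $x\to y=y$ otherwise; your larger examples work but are not forced.
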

\begin{proof} For axiom \ref{ax3}, we can again use the two-element lattice on $\{0,1\}$ with $\to$ defined by $a\to b = b$, as in the proof of Fact \ref{PreconInd}.

For axiom \ref{ax4}, consider the three-element lattice on $\{0,1/2,1\}$ with $\to$ defined as follows: $x\to y=1$ if $x=y$ and otherwise $x\to y=y$. Then $0\to 0=1\not\leq 1/2=0\to 1/2$, so axiom \ref{ax4} does not hold. However, one can check that axiom \ref{ax3}, modus ponens, and weak monotonicity hold. 

For modus ponens, consider again the two-element lattice on $\{0,1\}$ with $\to$ defined by $a\to b=1$. Since $1\wedge (1\to 0)=1\wedge 1=1\not\leq 0$, modus ponens does not hold, but the other axioms clearly do.

Finally, for weak monotonicity, consider again the two-element lattice on $\{0,1\}$ with $\to$ defined by $a\to b=0$. Since $1\not\leq 0 = 1\to 1$, weak monotonicity does not hold, but the other axioms clearly do.\end{proof}

A natural weakening of modus ponens to consider is that the derived $\neg$ operation is a \textit{semicomplementation}: $a\wedge (a\to 0)=0$. Let us say that a \textit{proto-Heyting implication} is a preconditional satisfying weak monotonicity and semicomplementation. The implication used in the proof of Fact \ref{MPWM}.\ref{MPWM1} is a proto-Heyting implication that is not a Heyting implication.

\subsection{Sasaki hook}\label{SasakiSection}

For our second example, an \textit{ortholattice} is a bounded lattice $L$ equipped with a unary operation $\neg$, called an \textit{orthocomplementation}, satisfying  \begin{itemize}
\item antitonicity: $a\leq b$ implies $\neg b\leq\neg a$,
\item semicomplementation: $a\wedge\neg a=0$,  and
\item involution: $\neg\neg a=a$. 
\end{itemize}
From these properties, one can derive excluded middle ($a\vee\neg a=1$)\footnote{Since $a\leq a\vee \neg a$ and $\neg a\leq a\vee\neg a$, we have $\neg (a\vee\neg a)\leq \neg a\wedge\neg\neg a=0$, so $\neg 0\leq \neg\neg (a\vee\neg a)=a\vee\neg a$. Finally, $1\leq \neg\neg 1$ and $\neg 1=1\wedge\neg 1=0$, so $1\leq \neg 0$, which with the previous sentence implies $1\leq a\vee\neg a$.}  and De Morgan's laws ($\neg (a\vee b)=\neg a\wedge\neg b $ and $\neg (a\wedge b)=\neg a\vee\neg b$).

In an ortholattice, the \textit{Sasaki hook} is the binary operation defined by
\[a\overset{s}{\to} b:= \neg a\vee (a\wedge b) = \neg (a\wedge\neg (a\wedge b)).\]
\noindent The following is immediate from Proposition \ref{PrecompToPrecond}.

\begin{cor}\label{OrthoSasaki} In any ortholattice, the Sasaki hook is a preconditional.
\end{cor}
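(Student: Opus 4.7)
The plan is to reduce the claim directly to Proposition \ref{PrecompToPrecond}. By definition, the Sasaki hook is $a\overset{s}{\to} b = \neg a\vee (a\wedge b)$, which is precisely the formula appearing in Proposition \ref{PrecompToPrecond}. So it suffices to verify that an orthocomplementation on a bounded lattice is a precomplementation, i.e., that it satisfies antitonicity and $\neg 1=0$.

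Antitonicity is built into the definition of an ortholattice, so the only thing to check is $\neg 1 = 0$. For this, I would observe that $\neg 1 \leq 1$ implies $\neg 1 = 1\wedge \neg 1$, and then apply semicomplementation to conclude $\neg 1 = 1\wedge\neg 1 = 0$. (Alternatively, one can cite the footnote's computation $\neg 1 = 1\wedge \neg 1 = 0$, which is already used in the derivation of excluded middle immediately above the corollary.)

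Having established that $\neg$ is a precomplementation, Proposition \ref{PrecompToPrecond} applies verbatim and tells us that the operation $a\to b := \neg a\vee (a\wedge b)$ is a preconditional. Since this operation is exactly the Sasaki hook, the corollary follows. There is no real obstacle here; the only nontrivial step is the one-line derivation of $\neg 1 = 0$, which is the reason the corollary is labeled as immediate from the preceding proposition.
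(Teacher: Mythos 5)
Your proposal is correct and matches the paper's approach: the paper simply declares the corollary immediate from Proposition \ref{PrecompToPrecond}, and the one detail you spell out, that $\neg 1 = 1\wedge\neg 1 = 0$ by semicomplementation, is exactly the computation already recorded in the paper's footnote on excluded middle. Nothing further is needed.
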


Next we add axioms on a preconditional $\to$ to characterize the Sasaki hook. First, note that one half of the equation $a\to b=\neg a\vee (a\wedge b)$, where $\neg$ is now defined from $\to$, already follows from the preconditional axioms.

\begin{lem}\label{HalfSasaki} For any preconditional $\to$ on a bounded lattice, we have \[( a \to 0)\vee (a\wedge b)\leq a\to b.\]
\end{lem}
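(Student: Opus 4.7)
The plan is to split the join on the left-hand side into its two joinands and bound each one individually by $a \to b$; the conclusion will then follow from the universal property of the join.

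The joinand $a \wedge b$ is bounded by $a \to b$ directly by axiom 2 of preconditionals.

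The joinand $a \to 0$ is the one that requires a small trick, but not a deep one: since $b \wedge 0 = 0$ in any bounded lattice, instantiating axiom 4 with $c := 0$ gives
\[a \to 0 \;=\; a \to (b \wedge 0) \;\leq\; a \to b.\]
This is the only step that is not completely mechanical, and even it amounts just to noticing that $0$ can be written as a meet with $b$ so that axiom 4 applies.

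Taking the join of the two bounds gives $(a \to 0) \vee (a \wedge b) \leq a \to b$. I do not anticipate any real obstacle; the lemma is essentially a two-line verification using axioms 2 and 4, and in particular does not require axioms 1, 3, or 5.
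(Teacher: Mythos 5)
Your proof is correct and matches the paper's own argument exactly: both bound $a\wedge b$ by axiom 2 and $a\to 0$ by axiom 4 (the paper leaves the instantiation $0=b\wedge 0$ implicit, which you spell out), then combine via the join. Nothing to add.
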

\begin{proof} We have $a\to 0\leq a\to b$ by axiom \ref{ax4} of preconditionals and $a\wedge b\leq a\to b$ by axiom \ref{ax2} of preconditionals, so $( a \to 0)\vee (a\wedge b)\leq a\to b$.\end{proof}

To prove the reverse inequality, we assume that the negation defined by $\to$ is an involutive semicomplementation.

\begin{prop}\label{SasakiCharOL}
For any bounded lattice $L$ and binary operation $\to$ on $L$, the following are equivalent:
\begin{enumerate}
\item\label{SasakiCharOL1} $\to$ is a preconditional with $a\wedge (a\to 0)=0$ and $(a\to 0)\to 0=a$;
\item\label{SasakiCharOL2} $L$ equipped with $\neg$ defined by $\neg a:=a\to 0$ is an ortholattice and $\to$ coincides with the Sasaki hook: $a\to b=\neg a\vee (a\wedge b)$.
\end{enumerate}
\end{prop}
\begin{proof} From \ref{SasakiCharOL2} to \ref{SasakiCharOL1}, that $\to$ is a preconditional follows from Corollary \ref{OrthoSasaki}. That $a\wedge (a\to 0)=0$ and $(a\to 0)\to 0=a$ follows from the definition of $\neg$ and the assumption that $\neg$ is an orthocomplementation.

From \ref{SasakiCharOL1} to \ref{SasakiCharOL2}, first we show that $\neg$ is an orthocomplementation. Both $a\wedge \neg a=0$ and $\neg\neg a=a$ follow from our assumptions on $\to$ and the definition of $\neg$. That $a\leq b$ implies $\neg b\leq \neg a$ is given by Proposition \ref{NegLem}.\ref{NegLem1}.

Finally, we show $a\to b=\neg a\vee (a\wedge b)$. The right-to-left inclusion is given by Lemma \ref{HalfSasaki}, so it only remains to show $a\to b\leq \neg a\vee (a\wedge b)$:
\begin{eqnarray*}
&&x\wedge\neg y\leq \neg y\\
&\Rightarrow& \neg\neg y \leq \neg (x\wedge \neg y)\quad\mbox{by antitonicity for }\neg \\
&\Rightarrow & y\leq  \neg (x\wedge \neg y)\quad\mbox{by involution for }\neg \\
&\Rightarrow& x\to y\leq x\to \neg (x\wedge \neg y)\quad\mbox{by axiom \ref{ax4} of preconditionals} \\
&\Rightarrow& x\to y\leq x\to ((x\wedge \neg y) \to 0)\quad\mbox{by definition of }\neg \\
&\Rightarrow& x\to y\leq (x\wedge \neg y) \to 0\quad\mbox{by axiom \ref{ax5} of preconditionals} \\
&\Rightarrow& x\to y\leq \neg (x\wedge \neg y)\quad\mbox{by definition of }\neg \\
&\Rightarrow& x\to y\leq \neg x\vee y \quad\mbox{by De Morgan's law and involution for }\neg\\
&\Rightarrow& a\to (a\wedge b)\leq \neg a\vee  (a\wedge b)\quad\mbox{substituting $a$ for $x$, $a\wedge b$ for $y$} \\
&\Rightarrow& a\to b\leq \neg a\vee  (a\wedge b)\quad\mbox{by axiom \ref{ax3} of preconditionals}.\qedhere
\end{eqnarray*}
\end{proof}

 An ortholattice is \textit{orthomodular} if  $a\leq b\mbox{ implies }b=a\vee (\neg a\wedge b)$. In fact, as observed by Mittelstaedt \citeyearpar{Mittelstaedt1972}, orthomodularity is equivalent to the Sasaki hook satisfying modus ponens.
 
 \begin{lem}[Mittelstaedt]\label{MPOML} An ortholattice $L$ is orthomodular if and only if $a\wedge (\neg a\vee (a\wedge b))\leq b$ for all~$a,b\in L$. \end{lem}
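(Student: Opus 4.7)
The plan is to prove the two directions of the biconditional separately. Observe first that $a\wedge(\neg a\vee(a\wedge b))\leq b$ is precisely modus ponens for the Sasaki hook $a\overset{s}{\to} b$, so the lemma asserts the classical equivalence between orthomodularity and Sasaki modus ponens.

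For the forward direction, assuming orthomodularity, I will in fact prove the sharper equality $a\wedge(\neg a\vee(a\wedge b))=a\wedge b$. Setting $u:=\neg a\vee(a\wedge b)$, I first observe that $a\wedge b\leq a\wedge u$, then apply orthomodularity to this inclusion to obtain
\[a\wedge u=(a\wedge b)\vee(\neg(a\wedge b)\wedge a\wedge u).\]
Letting $v:=\neg(a\wedge b)\wedge a$, the key step is to note by antitonicity that both $\neg a\leq\neg v$ (from $v\leq a$) and $a\wedge b\leq\neg v$ (from $v\leq\neg(a\wedge b)$), so $u\leq\neg v$ and hence $v\wedge u\leq v\wedge\neg v=0$. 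This kills the second disjunct, leaving $a\wedge u=a\wedge b\leq b$.

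For the reverse direction, I would derive orthomodularity from the hypothesis by a single substitution. Given $a\leq c$, instantiate the hypothesis with $a\mapsto\neg a$ and $b\mapsto\neg c$ to obtain $\neg a\wedge(a\vee(\neg a\wedge\neg c))\leq\neg c$. Taking orthocomplements and repeatedly applying De Morgan's laws and involution converts this into $c\leq a\vee(\neg a\wedge(a\vee c))$; using $a\leq c$ to simplify $a\vee c=c$ yields $c\leq a\vee(\neg a\wedge c)$. Combined with the trivial reverse inclusion, this gives the orthomodular identity $c=a\vee(\neg a\wedge c)$.

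I expect the forward direction to be the main obstacle: one must both recognize the right instance of orthomodularity to apply (namely to $a\wedge b\leq a\wedge u$) and then dispose of the leftover disjunct by a double antitonicity argument passing through the auxiliary element $v=\neg(a\wedge b)\wedge a$. The reverse direction, by contrast, amounts to a mechanical De Morgan unwinding of the hypothesis under the substitution $(a,b)\mapsto(\neg a,\neg c)$.
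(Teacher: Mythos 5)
Your proof is correct. Note that the paper itself does not prove this lemma---it is stated without proof and attributed to Mittelstaedt (1972)---so there is no in-text argument to compare against; your write-up supplies the missing details. Both directions check out: in the forward direction, applying orthomodularity to the inclusion $a\wedge b\leq a\wedge u$ (with $u=\neg a\vee(a\wedge b)$) gives $a\wedge u=(a\wedge b)\vee(v\wedge u)$ with $v=\neg(a\wedge b)\wedge a$, and your double use of antitonicity plus involution indeed yields $u\leq\neg v$, so the second disjunct vanishes by semicomplementation and $a\wedge u=a\wedge b\leq b$. In the reverse direction, the substitution $(a,b)\mapsto(\neg a,\neg c)$ followed by orthocomplementation, De Morgan, involution, and the simplification $a\vee c=c$ from $a\leq c$ correctly recovers $c\leq a\vee(\neg a\wedge c)$, and the converse inclusion is immediate. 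This is essentially the standard argument for the Mittelstaedt equivalence, and it uses only the ortholattice properties the paper has already established (antitonicity, semicomplementation, involution, excluded middle, De Morgan).
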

 
Combining Lemma \ref{MPOML} with Proposition \ref{SasakiCharOL} yields the following.

\begin{prop}\label{SasakiCharOML}
For any bounded lattice $L$ and binary operation $\to$ on $L$, the following are equivalent:
\begin{enumerate}
\item $\to$ is a preconditional satisfying modus ponens and $(a\to 0)\to 0=a$;
\item $L$ equipped with $\neg$ defined by $\neg a:=a\to 0$ is an orthomodular lattice and $\to$ coincides with the Sasaki hook: $a\to b=\neg a\vee (a\wedge b)$.
\end{enumerate}
\end{prop}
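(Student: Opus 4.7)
The plan is to derive the proposition almost entirely from the two results immediately preceding it, with modus ponens playing the role of the semicomplementation condition in Proposition~\ref{SasakiCharOL}.

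For the direction from 2 to 1, I would note that Corollary~\ref{OrthoSasaki} already guarantees that the Sasaki hook is a preconditional (since an orthomodular lattice is an ortholattice), and the condition $(a\to 0)\to 0 = a$ is just involution for $\neg$. The only new ingredient is modus ponens, which by Lemma~\ref{MPOML} is precisely the inequality $a\wedge(\neg a \vee (a\wedge b))\leq b$ characterizing orthomodularity, i.e.\ $a\wedge(a\to b)\leq b$ with $\to$ the Sasaki hook.

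For the direction from 1 to 2, the key observation is that modus ponens with $b=0$ instantiates to $a\wedge(a\to 0)\leq 0$, so $a\wedge \neg a = 0$. Together with $(a\to 0)\to 0 = a$ and the preconditional hypothesis, this gives exactly the hypotheses of Proposition~\ref{SasakiCharOL}, so we conclude that $L$ with $\neg$ is an ortholattice and $\to$ is the Sasaki hook. Substituting the Sasaki hook into modus ponens yields $a\wedge(\neg a \vee (a\wedge b))\leq b$, which by Lemma~\ref{MPOML} upgrades the ortholattice to an orthomodular lattice.

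There is no real obstacle here; the proof is a short assembly of Proposition~\ref{SasakiCharOL} and Lemma~\ref{MPOML}, with the single small observation that modus ponens entails semicomplementation by instantiation at $b=0$. The proof should be only a few lines long.
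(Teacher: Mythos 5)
Your proposal is correct and matches the paper's approach: the paper gives no explicit proof, stating only that the proposition follows by combining Lemma~\ref{MPOML} with Proposition~\ref{SasakiCharOL}, which is precisely the assembly you carry out. Your observation that modus ponens instantiated at $b=0$ yields semicomplementation is exactly the small bridge needed to invoke Proposition~\ref{SasakiCharOL}.
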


Figure \ref{ClassesFig} summarizes the relations between the classes of preconditionals covered so far (OL and OML stand for ortho- and orthomodular lattices). We also add the classical material implication of Boolean algebras, which is equivalent to Heyting implication with involution of $\neg$ and to Sasaki hook in orthomodular lattices with weak monotonicity (by Proposition~\ref{RelPseudo}, Lemma~\ref{MPOML}).

\begin{figure}[h]
\begin{center}{\footnotesize
\begin{tikzpicture}[scale=.75,->,>=stealth',shorten >=1pt,shorten <=1pt,node
distance=2cm,thick,every loop/.style={<-,shorten <=1pt}]

\node at (0,0) (classical) {\textbf{classical material implication}};

\node at (-5.1,-2.5) (intuitionistic) {\textbf{Heyting implication}};

\node at (5.1,-2.5) (OML) {\textbf{Sasaki hook in OML}};

\node at (5.1,-5) (OL) {\textbf{Sasaki hook in OL}};

\node at (-5.1,-5) (WM) {\textbf{proto-Heyting implication}};

\node at (0,-5) (preconMP) {\textbf{preconditionals with MP}};

\node at (0,-7.5) (preconSC) {\textbf{preconditionals with semicomp}};

\node at (0,-10) (precon) {\textbf{preconditionals}};

\path[->,draw,thick,bend right] (preconMP) to node[fill=white] {$b\leq a\to b$} (intuitionistic);
\path[->,draw,thick,bend left] (preconMP) to node[fill=white] {$(a\to 0)\to 0=a$} (OML);

\path[->,draw,thick, bend left] (intuitionistic) to node[fill=white] {$(a\to 0)\to 0=a$} (classical);
\path[->,draw,thick,bend right] (OML) to node[fill=white] {$b\leq a\to b$} (classical);
\path[->,draw,thick] (preconSC) to node[fill=white] {$a\wedge (a\to b)\leq b$} (preconMP);
\path[->,draw,thick,bend right] (preconSC) to node[fill=white] {{\footnotesize $(a\to 0)\to 0=a$}} (OL);
\path[->,draw,thick] (precon) to node[fill=white] {$a\wedge (a\to 0)= 0$} (preconSC);
\path[->,draw,thick] (OL) to node[fill=white] {$a\wedge (a\to b)\leq b$} (OML);

\path[->,draw,thick, bend left] (preconSC) to node[fill=white] {{\footnotesize $b\leq a\to b$}} (WM);
\path[->,draw,thick] (WM) to node[fill=white] {$a\wedge (a\to b)\leq b$} (intuitionistic);

\end{tikzpicture}}\end{center}
\caption{Classes of preconditionals.}\label{ClassesFig}
\end{figure}

\subsection{Lewis-Stalnaker-style conditionals}\label{LewisStalnakerSection}

The third example of lattices with preconditionals that we will consider are Boolean algebras with Lewis-Stalnaker-style conditionals (\citealt{stalnaker1968}, \citealt{lewis1973}) satisfying the axiom of \textit{flattening} (\citealt[\S\S~6.4.1-6.4.2]{mandelkernForth}, citing Cian Dorr, p.c.):
\[a\to ((a\wedge b)\to c)= (a\wedge b)\to c.\]
Axiom \ref{ax5} of preconditionals is simply the left-to-right inclusion. 

Lewis-Stalnaker-style (set-)selection function semantics in effect treats a conditional $a\to b$ as the result of applying an $a$-indexed normal modal operator $\Box_a$ to $b$. That is, there is a binary relation $R_a$ between worlds, and $a\to b$ is true at $w$ iff all $R_a$-accessible worlds from $w$ make $b$ true. Further constraints are imposed so that the relations $R_a$ can be derived from well-founded preorderings of the set of worlds: $wR_av$ iff $v$ is one of the closest $a$-worlds to $w$ according to the well-founded preorder $\leqslant_w$ associated with $w$. But for our purposes here, the key aspects of the Lewis-Stalnaker (set-)selection function semantics are captured by the following definition.

\begin{defn} A \emph{selection frame} is a pair $(W,\{R_A\}_{A\subseteq W})$ where $W$ is a nonempty set and each $R_A$ is a binary relation on $W$ satisfying the following for all $w,v\in W$ and $A\subseteq W$:
\begin{enumerate}
\item \emph{success}: if $wR_Av$, then $v\in A$;
\item \emph{centering}: if $w\in A$, then $wR_Av$ iff $v=w$.
\end{enumerate}
Such a frame is \emph{functional} if it satisfies the following:
\begin{enumerate}
\item[3.] if $wR_Av$ and $wR_Au$, then $v=u$.
\end{enumerate}
Such a frame is \emph{strongly dense} if it satisfies the following:
\begin{enumerate}
\item[4.] if $wR_{A\cap B}v$, then $\exists u$: $wR_Au$ and $uR_{A\cap B}v$.
\end{enumerate}
\end{defn}

Strong density says that instead of conditioning on a stronger proposition, one can first condition on a weaker one, then condition on the stronger one, and end up in the same state as one would reach by conditioning on the stronger proposition straightaway. Though not all Lewis-Stalnaker-style frames that satisfy success, centering, and functionality are strongly dense, the following are, as observed by Boylan and Mandelkern~\citeyearpar{Boylan2022}.

\begin{exmp} Given a well-ordered set $(W,\leqslant)$, for any $w\in W$ and $A\subseteq W$, let $wR_Av$ iff $v$ is the first world in $A$ according to $\leqslant$ such that $w\leqslant v$. Then $(W,\{R_A\}_{A\subseteq W})$ is a strongly dense, functional selection frame.
\end{exmp}

\begin{prop} For any strongly dense selection frame $(W,\{R_A\}_{A\subseteq W})$, the operation $\to_R$ defined by
\[A\to_R B := \Box_A B=\{w\in W\mid \mbox{for all }v\in W, wR_Av\Rightarrow v\in B\}\]
is a preconditional on the Boolean algebra $\wp(W)$.
\end{prop}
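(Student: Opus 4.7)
The plan is to verify each of the five preconditional axioms directly from the definition of $\to_R$ using the four frame conditions. The first four axioms will need only success and centering, while axiom 5 is where strong density earns its keep.

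For axiom 1, I would take $w \in W \to_R B$; centering applied to $w \in W$ gives $wR_W w$, so $w \in B$. For axiom 2, for $w \in A \cap B$ and any $v$ with $wR_A v$, centering (since $w \in A$) forces $v = w \in B$, so $w \in A \to_R B$. For axiom 3, given $w \in A \to_R B$ and $v$ with $wR_A v$, success gives $v \in A$ and the hypothesis gives $v \in B$, hence $v \in A \cap B$. Axiom 4 is trivial monotonicity of $\Box_A$: if every $R_A$-successor of $w$ lies in $B \cap C$, then in particular it lies in $B$.

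The substantive case is axiom 5. Suppose $w \in A \to_R ((A \cap B) \to_R C)$, and pick any $v$ with $wR_{A \cap B} v$; the goal is $v \in C$. Here I would invoke strong density to obtain some $u$ with $wR_A u$ and $u R_{A\cap B} v$. The outer $\Box_A$ hypothesis on $w$ then yields $u \in (A \cap B) \to_R C$, and applying this to $u R_{A \cap B} v$ gives $v \in C$, as desired.

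I expect no real obstacles; the main point is simply to notice that axiom 5 is exactly a normal-modal composition law for the family $\{\Box_A\}$, and that strong density is precisely the frame condition making the composition $\Box_A \Box_{A \cap B}$ dominate $\Box_{A \cap B}$ in the relevant direction. Everything else is routine unpacking of success and centering, which is why axioms 1--4 hold even without strong density.
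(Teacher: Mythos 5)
Your proposal is correct and matches the paper's proof essentially step for step: axioms 1 and 2 from the two directions of centering, axiom 3 from success, axiom 4 from monotonicity of $\Box_A$, and axiom 5 via the interpolated $u$ supplied by strong density. Nothing to add.
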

\begin{proof} We must check the following for $\to\,=\,\to_R$:
\begin{enumerate}
\item $W\to A\subseteq A$; 2. $A\cap B\subseteq A\to B$; 3. $ A\to B \subseteq A\to (A\cap B)$;
\item[4.]  $A\to (B\cap C)\subseteq A\to B$; 5. $A\to ((A\cap B)\to C)\subseteq (A\cap B)\to C$.
\end{enumerate}
Condition 1 follows from centering (in particular, the right-to-left direction of the biconditional in centering), as does condition 2 (but now the left-to-right direction of the biconditional in centering); condition 3 follows from success; and condition 4 is immediate from the definition of $\to$. For condition 5, suppose $w\in  A\to ((A\cap B)\to C)$ and $wR_{A\cap B}v$. Then by strong density, there is a $u$ such that $wR_Au$ and $uR_{A\cap B}v$. Since $w\in  A\to ((A\cap B)\to C)$ and $wR_Au$, we have $u\in (A\cap B)\to C$, which with $uR_{A\cap B}v$ yields $v\in C$. This shows that $w\in (A\cap B)\to C$.\end{proof}

The key principles validated by selection frames beyond the axioms of preconditionals are modus ponens, 
\begin{itemize}
\item identity: $a\to a=1$, and
\item normality: $(a\to b)\wedge (a\to c)\leq a\to (b\wedge c)$.
\end{itemize}
Functional frames also validate 
\begin{itemize}
\item negation import: $\neg (a\to b)\leq a\to \neg b$.
\end{itemize}

\begin{prop} Let $B$ be a finite Boolean algebra equipped with a preconditional $\to$  satisfying modus ponens, identity, normality, and negation import. Let $W$ be the set of atoms of $B$ and $\widehat{(\cdot)}$ the isomorphism from $B$ to $\wp(W)$. For $a\in B$, define \[\mbox{$wR_{\widehat{a}}v$ iff for all $b\in B$, $w\leq a\to b$ implies $v\leq b$.}\]
Then $(W,\{R_A\}_{A\subseteq W})$ is a strongly dense, functional selection frame, and $(B,\to)$ is isomorphic to $(\wp(W),\to_R)$.
\end{prop}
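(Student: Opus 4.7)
The plan is to verify the required properties of the frame in order---success, centering, functionality, strong density---and then the isomorphism $\widehat{a \to b} = \widehat{a} \to_R \widehat{b}$. Since $B$ is finite, $\widehat{\cdot}$ is a Boolean isomorphism, so every subset of $W$ has the form $\widehat{a}$ for a unique $a \in B$ and $R_A$ is well-defined for all $A \subseteq W$. I would read off success from identity ($w \leq 1 = a \to a$ forces $v \leq a$ whenever $wR_{\widehat{a}}v$). Both directions of centering use two different axioms: if $w \leq a$, then axiom 2 of preconditionals gives $w = a \wedge w \leq a \to w$, so $wR_{\widehat{a}}v$ forces $v \leq w$ and hence $v = w$ since both are atoms; conversely, MP applied inside the defining clause of $R_{\widehat{a}}$ shows $wR_{\widehat{a}}w$.

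Functionality is where negation import enters. Given atoms $u, v$ with $w R_{\widehat{a}} u$ and $w R_{\widehat{a}} v$, either $w \leq a \to u$---whence $v \leq u$ and so $v = u$---or $w \leq \neg(a \to u)$, in which case negation import gives $w \leq a \to \neg u$, so $u \leq \neg u$, contradicting $u \neq 0$.

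The key technical step is to analyze the set $S^a_w := \{c \in B : w \leq a \to c\}$. Normality gives closure under binary meets, axiom 4 of preconditionals shows that $a \to (\cdot)$ is monotone and so $S^a_w$ is upward closed, and identity shows $a \in S^a_w$. Hence $S^a_w$ is a filter of the finite Boolean algebra $B$, so it is principal with generator $m^a_w := \bigwedge S^a_w$. By functionality, no two distinct atoms sit below $m^a_w$, so $m^a_w$ is either $0$ or itself an atom. In the atom case, writing $u := m^a_w$, the useful reformulation is that $w \leq a \to c$ iff $u \leq c$. Strong density then splits into two cases. If $m^a_w$ is an atom $u$, I would take $u$ as the witness: whenever $u \leq (a \wedge b) \to c$, the reformulation and axiom 5 of preconditionals yield $w \leq a \to ((a \wedge b) \to c) \leq (a \wedge b) \to c$, so the hypothesis $w R_{\widehat{a \wedge b}} v$ gives $v \leq c$, establishing $u R_{\widehat{a \wedge b}} v$. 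If instead $m^a_w = 0$, then $w \leq a \to 0 = \neg a$, and Proposition \ref{NegLem}.\ref{NegLem1} gives $w \leq \neg(a \wedge b)$, so the same filter analysis applied to $a \wedge b$ shows that $R_{\widehat{a \wedge b}}$ has no successors from $w$ at all and the hypothesis of strong density is vacuous.

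The same dichotomy also proves the isomorphism. When $m^a_w$ is an atom $u$, both $w \leq a \to b$ and the condition that every $v$ with $wR_{\widehat{a}}v$ satisfies $v \leq b$ reduce to $u \leq b$. When $m^a_w = 0$, the latter condition holds vacuously, while monotonicity of $a \to (\cdot)$ applied to $0 \leq b$ gives $w \leq \neg a = a \to 0 \leq a \to b$, so the former holds too. The main obstacle I anticipate is orchestrating which of the four auxiliary axioms does which job---MP and axiom 2 for centering, negation import for functionality, normality and axiom 4 for the filter structure, and axiom 5 for the strong density reduction---and noticing that the filter structure on $S^a_w$ together with functionality is precisely what lets the vacuous cases of both strong density and the isomorphism go through.
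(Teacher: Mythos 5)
Your proof is correct and follows essentially the same route as the paper's: identity for success, modus ponens for centering, negation import (via the atom dichotomy) for functionality, and the element $\bigwedge\{c\in B : w\leq a\to c\}$ (your $m^a_w$, the paper's $x$) together with normality and axiom 5 for strong density. The only minor variations are that you obtain the forward direction of centering directly from axiom 2 rather than from functionality, you replace the paper's second appeal to negation import in the strong-density step by your filter reformulation $w\leq a\to c$ iff $m^a_w\leq c$, and you spell out the final isomorphism, which the paper leaves as the standard argument for a normal modal box.
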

\begin{proof} First, we check the following:
\begin{enumerate}
\item success: if $wR_{\widehat{a}}v$, then $v\leq a$;
\item centering: if $w\leq a$, then $wR_{\widehat{a}}v$ iff $v=w$;
\item functionality: if $wR_{\widehat{a}}v$ and $wR_{\widehat{a}}u$, then $v=u$;
\item strong density: if $wR_{\widehat{a\wedge b}}v$, then $\exists u$: $wR_{\widehat{a}}u$ and  $uR_{\widehat{a\wedge b}}v$.
\end{enumerate}
For success, given $w\leq a\to a$ from identity,  $wR_{\widehat{a}}v$ implies $v\leq a$.

For centering, assume $w\leq a$. Modus ponens for $\to$ yields $wR_{\widehat{a}}w$. Then the rest of centering follows given functionality, which we prove next.

For functionality, if $wR_{\widehat{a}}v$, then we claim $w\leq a\to v$. For if ${w\not\leq a\to v}$, then since $w$ is an atom, we have $w\leq \neg (a\to v)$ and hence $w\leq a\to\neg v$ by negation import, contradicting $wR_{\widehat{a}}v$. Then since $w\leq a\to v$, if  $wR_{\widehat{a}}u$, then $u\leq v$, which implies $u=v$ given that $v$ is an atom.

For strong density, assume $wR_{\widehat{a\wedge b}}v$. Let
\[x= \bigwedge \{ y\in B\mid w\leq a\to y\}.\]
We claim that $x\neq 0$. Otherwise there are $y_1,\dots, y_n$ such that \[w\leq (a\to y_1)\wedge\dots \wedge (a\to y_n)\mbox{ and }y_1\wedge\dots\wedge y_n=0.\]But then by normality and axioms \ref{ax4}-\ref{ax5} of preconditionals, \[w\leq a\to (y_1\wedge\dots\wedge y_n)=a\to 0\leq (a\wedge b)\to 0,\] contradicting $wR_{\widehat{a\wedge b}}v$. Hence $x\neq 0$, so there is an atom $u\leq x$, and by construction of $x$, $wR_{\widehat{a}}u$. To show $uR_{\widehat{a\wedge b}}v$, suppose $u\leq (a\wedge b) \to c$. Then $w\leq a\to ((a\wedge b) \to c)$, for otherwise $w\leq a\to \neg ((a\wedge b) \to c)$ using negation import, in which case $u\leq \neg ((a\wedge b) \to c)$ by construction of $x$, which contradicts $u\leq (a\wedge b) \to c$ given modus ponens.  Then since $w\leq a\to ((a\wedge b) \to c)$, we have $w\leq (a\wedge b)\to c$ by axiom \ref{ax5} of preconditionals, which with $wR_{\widehat{a\wedge b}}v$ implies $v\leq c$. Thus, $uR_{\widehat{a\wedge b}}v$.

Finally, the proof that  $\widehat{a\to b}=\widehat{a}\to_R\widehat{b}$ is just like the usual proof for a normal modal box.\end{proof}

\noindent This kind of result can be generalized beyond finite algebras (e.g., to \mbox{complete} and atomic algebras, assuming $\bigwedge \{a\to b_i\mid i\in I\}\leq a\to \mbox{$\bigwedge \{b_i\mid i\in I\}$}$) and beyond Boolean algebras, but we will not do so here.

Finally, let us return to the Heyting and Sasaki examples of Sections \ref{HeytingSection} and \ref{SasakiSection}, respectively, with identity, normality, and negation import in mind.

\begin{prop}\label{NormalityProp} $\,$
\begin{enumerate}
\item\label{NormalityProp1} Heyting implications satisfy identity, normality, and negation import.
\item\label{NormalityProp2} Proto-Heyting implications satisfy identity and negation import, but not necessarily normality.\footnote{\label{NormalProto}Moreover, there are normal proto-Heyting implications that are not Heyting implications, such as the implication used in the proof of Fact \ref{MPWM}.\ref{MPWM1}.}
\item\label{NormalityProp3} In orthomodular lattices, Sasaki hook satisfies normality, but not necessarily negation import.
\item\label{NormalityProp4} In ortholattices, Sasaki hook satisfies identity but not necessarily normality.\footnote{\label{NormalSasaki}Moreover, there are non-orthomodular lattices in which the Sasaki hook satisfies normality, such as the lattice O6 (the ``benzene ring'').}
\end{enumerate}
\end{prop}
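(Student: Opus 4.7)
My plan is to derive each positive claim from the preconditional axioms together with the characterizations established earlier (Propositions \ref{RelPseudo}, \ref{SasakiCharOL}, \ref{SasakiCharOML}, \ref{NegLem}, Lemmas \ref{HalfSasaki}, \ref{MPOML}), and to supply finite counterexample lattices for each negative claim.

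For part \ref{NormalityProp1}, Heyting identity follows from $1\wedge a\leq a$ via adjunction. For normality, applying modus ponens twice gives $a\wedge(a\to b)\wedge(a\to c)\leq b\wedge c$, and adjoining yields $(a\to b)\wedge(a\to c)\leq a\to(b\wedge c)$. For negation import, axiom 2 gives $a\wedge b\leq a\to b$, so by antitonicity (Proposition \ref{NegLem}.\ref{NegLem1}), $\neg(a\to b)\leq\neg(a\wedge b)=(a\wedge b)\to 0=a\to(b\to 0)=a\to\neg b$ by Heyting currying.

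For part \ref{NormalityProp2}, identity comes from weak monotonicity ($1\leq a\to 1$) combined with axiom 3 ($a\to 1\leq a\to(a\wedge 1)=a\to a$). Negation import is a short two-step argument: weak monotonicity gives $b\leq a\to b$, antitonicity (Proposition \ref{NegLem}.\ref{NegLem1}) gives $\neg(a\to b)\leq\neg b$, and weak monotonicity again gives $\neg b\leq a\to\neg b$. For the failure of normality, I would exhibit a proto-Heyting implication on a small lattice where $(a\to b)\wedge(a\to c)\not\leq a\to(b\wedge c)$; since the 4-element Boolean algebra is ruled out (the proto-Heyting axioms force classical implication there), the counterexample will live on a lattice of at least 5 elements, where $\to$-values on incomparable pairs can be chosen more freely.

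For part \ref{NormalityProp3}, identity is excluded middle. For normality, observe $\neg a\leq(a\to b)\wedge(a\to c)$ by Lemma \ref{HalfSasaki}; orthomodularity then gives $(a\to b)\wedge(a\to c)=\neg a\vee\bigl(a\wedge(a\to b)\wedge(a\to c)\bigr)$, and modus ponens (Lemma \ref{MPOML}) bounds the parenthesized meet above by $a\wedge b\wedge c$, yielding $(a\to b)\wedge(a\to c)\leq\neg a\vee(a\wedge b\wedge c)=a\to(b\wedge c)$. For the failure of negation import, $MO_2$ suffices: taking distinct atoms $a,b$, we have $a\to b=\neg a$ and hence $\neg(a\to b)=a$, while $a\to\neg b=\neg a\vee(a\wedge\neg b)=\neg a\vee 0=\neg a$, and $a\not\leq\neg a$.

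For part \ref{NormalityProp4}, identity is again excluded middle. For the failure of normality, since the footnote observes that O6 still satisfies it, I would construct or cite a larger non-orthomodular ortholattice containing a triple $(a,b,c)$ witnessing $(a\to b)\wedge(a\to c)\not\leq a\to(b\wedge c)$ for the Sasaki hook; without orthomodularity, both key steps of my OML argument (the decomposition of the meet via orthomodularity and the use of modus ponens) can fail, so such a triple is plausible.

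The main obstacle will be the counterexamples for parts \ref{NormalityProp2} and \ref{NormalityProp4}: the small-lattice search is tightly constrained by axioms 1--5 together with the extra assumptions (weak monotonicity and semicomplementation, respectively the orthocomplementation laws), so some care is needed to simultaneously satisfy everything while witnessing the failure of normality at a specific triple of elements.
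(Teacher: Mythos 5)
Your positive claims are all established correctly and by essentially the paper's own arguments: the derivation of identity from weak monotonicity plus axiom 3, the two-step negation-import argument via Proposition \ref{NegLem}.\ref{NegLem1}, and the orthomodular normality proof via the decomposition $(a\to b)\wedge(a\to c)=\neg a\vee\bigl(a\wedge(a\to b)\wedge(a\to c)\bigr)$ followed by Lemma \ref{MPOML} are exactly what the paper does. Your negation-import counterexample is also the paper's (its ``M4'' is your $MO_2$), though you should require the second atom $b$ to lie outside $\{a,\neg a\}$: if $b=\neg a$ then $a\to\neg b=a\to a=1$ and the failure is not witnessed.

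The genuine gap is that the negative claims in parts \ref{NormalityProp2} and \ref{NormalityProp4} are asserted but not proved: you explicitly defer the two normality counterexamples, and those are the entire content of ``but not necessarily normality'' in each part, so the proof is incomplete as it stands. For part \ref{NormalityProp2} the paper uses a six-element (non-distributive, so not Boolean) lattice $0<d<b,c<a<1$ with $b\wedge c=d$ and $b\vee c=a$, and defines $\to$ by $1\to x=x$, $x\to 0=0$ for $x\neq 0$, $a\to d=d$, and $x\to y=1$ in all remaining cases with $x\neq 1$, $y\neq 0$; then $(a\to b)\wedge(a\to c)=1\not\leq d=a\to(b\wedge c)$, while the preconditional axioms, weak monotonicity, and semicomplementation all hold. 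For part \ref{NormalityProp4} the paper uses an eight-element non-orthomodular ortholattice containing elements $b,c\leq a$ with $b\wedge c=0$ and $\neg a\vee b=\neg a\vee c=1$, so that for the Sasaki hook $(a\to b)\wedge(a\to c)=(\neg a\vee b)\wedge(\neg a\vee c)=1$ while $a\to(b\wedge c)=\neg a\vee 0=\neg a$; your instinct that O6 itself will not work is correct (the footnote says as much), and the needed configuration is essentially two benzene rings sharing $0$ and $1$. Until you exhibit concrete structures of this kind and verify the background axioms on them, parts \ref{NormalityProp2} and \ref{NormalityProp4} remain unproved.
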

\begin{proof} Part \ref{NormalityProp1} is  standard. For part \ref{NormalityProp2} and identity, by weak monotonicity and axiom \ref{ax3} of preconditionals, $1\leq a\to 1\leq a\to (a\wedge 1)=a\to a$. For negation import,  by weak monotonicity, $b\leq a\to b$, so ${\neg (a\to b)}\leq \neg b$ by Lemma \ref{NegLem}.\ref{NegLem1}, and $\neg b\leq a\to\neg b$ by weak monotonicity again, so indeed $\neg (a\to b)\leq a\to\neg b$. For a proto-Heyting implication that does not satisfy normality, consider the lattice on the left of Figure \ref{NonnormalFig}, and define $\to$ such that for any elements $x$ and $y$: $1\to x=x$; $x\to 0=0$ if $x\neq 0$; $0\to 0=1$; $x\to y=1$ if $x\neq 1$, $y\neq 0$, and $(x,y)\neq (a,d)$; and $a\to d=d$. Then $(a\to b)\wedge (a\to c)=1\wedge 1=1$ but $a\to (b\wedge c)=a\to d=d$, so normality does not hold, but one can check that the other axioms~hold. 

For part \ref{NormalityProp3}, in an orthomodular lattice,
$\neg x\leq y$ implies $y\leq \neg x\vee (x\wedge y)$.  Then since $\neg a\leq (a\to b)\wedge (a\to c)$, we have
\begin{eqnarray*}
(a\to b)\wedge (a\to c)&\leq& \neg a\vee (a\wedge (a\to b)\wedge (a\to c)) \\
&\leq&  \neg a\vee (a\wedge b\wedge c)\quad\mbox{by Lemma \ref{MPOML}} \\
&=& a\to (b\wedge c)\quad\mbox{ by definition of Sasaki hook}.
\end{eqnarray*}
To see that negation import is not necessarily satisfied, consider the modular lattice M4 with elements $\{0,a,b,c,d,1\}$ such that $a,b,c,d$ are incomparable in the lattice order, turned into an ortholattice with $\neg a=b$ and $\neg c=d$. Then  for the Sasaki hook we have $\neg (a\to c)= \neg (\neg a\vee (a\wedge c))= \neg(\neg a\vee 0)=\neg\neg a=a$, whereas $a\to \neg c= \neg a\vee (a\wedge \neg c)=\neg a\vee (a\wedge d)=\neg a\vee 0=\neg a$.

For part \ref{NormalityProp4}, identity for Sasaki hook is just excluded middle. For a failure of normality, consider the ortholattice in Figure \ref{NonnormalFig}. Then for the Sasaki hook, $(a\to b)\wedge (a\to c)=(\neg a\vee (a\wedge b))\wedge (\neg a\vee (a\wedge c))= (\neg a\vee b)\wedge (\neg a\vee c) = 1\wedge 1=1$, whereas $a\to (b\wedge c)=\neg a\vee (a\wedge b\wedge c)=\neg a\vee 0=\neg a$.\end{proof}

\begin{figure}
\begin{center}
\begin{tikzpicture}[->,>=stealth',shorten >=1pt,shorten <=1pt, auto,node
distance=2cm,semithick,every loop/.style={<-,shorten <=1pt}]
\tikzstyle{every state}=[fill=gray!20,draw=none,text=black]
\node[circle,draw=black!100,fill=black!100, label=left:$0$,inner sep=0pt,minimum size=.175cm] (0) at (0,0) {{}};
\node[circle,draw=black!100,fill=black!100, label=left:$d$,inner sep=0pt,minimum size=.175cm] (a) at (0,1) {{}};
\node[circle,draw=black!100,fill=black!100, label=left:$b$,inner sep=0pt,minimum size=.175cm] (b) at (-1,2) {{}};
\node[circle,draw=black!100,fill=black!100, label=right:$c$,inner sep=0pt,minimum size=.175cm] (c) at (1,2) {{}};
\node[circle,draw=black!100,fill=black!100, label=left:$a$,inner sep=0pt,minimum size=.175cm] (d) at (0,3) {{}};
\node[circle,draw=black!100,fill=black!100, label=left:$1$,inner sep=0pt,minimum size=.175cm] (1) at (0,4) {{}};

\path (0) edge[-] node {{}} (a);
\path (a) edge[-] node {{}} (b);
\path (a) edge[-] node {{}} (c);
\path (b) edge[-] node {{}} (d);
\path (c) edge[-] node {{}} (d);
\path (d) edge[-] node {{}} (1);

\end{tikzpicture}\qquad \begin{tikzpicture}[->,>=stealth',shorten >=1pt,shorten <=1pt, auto,node
distance=2cm,semithick,every loop/.style={<-,shorten <=1pt}]
\tikzstyle{every state}=[fill=gray!20,draw=none,text=black]
\node[circle,draw=black!100,fill=black!100, label=below:$0$,inner sep=0pt,minimum size=.175cm] (0) at (0,0) {{}};
\node[circle,draw=black!100,fill=black!100, label=left:$a$,inner sep=0pt,minimum size=.175cm] (a) at (-1.25,2) {{}};
\node[circle,draw=black!100,fill=black!100, label=left:$b$,inner sep=0pt,minimum size=.175cm] (b) at (-1.75,1) {{}};
\node[circle,draw=black!100,fill=black!100, label=right:$c$,inner sep=0pt,minimum size=.175cm] (c) at (-.75,1) {{}};
\node[circle,draw=black!100,fill=black!100, label=right:$\neg a$,inner sep=0pt,minimum size=.175cm] (na) at (1.25,1) {{}};
\node[circle,draw=black!100,fill=black!100, label=right:$\neg c$,inner sep=0pt,minimum size=.175cm] (nb) at (1.75,2) {{}};
\node[circle,draw=black!100,fill=black!100, label=left:$\neg b$,inner sep=0pt,minimum size=.175cm] (nc) at (.75,2) {{}};
\node[circle,draw=black!100,fill=black!100, label=above:$1$,inner sep=0pt,minimum size=.175cm] (1) at (0,3) {{}};
\path (0) edge[-] node {{}} (b);
\path (0) edge[-] node {{}} (c);
\path (a) edge[-] node {{}} (b);
\path (a) edge[-] node {{}} (c);
\path (a) edge[-] node {{}} (1);

\path (0) edge[-] node {{}} (na);

\path (na) edge[-] node {{}} (nb);
\path (na) edge[-] node {{}} (nc);
\path (nb) edge[-] node {{}} (1);
\path (nc) edge[-] node {{}} (1);

\end{tikzpicture} 
\end{center}
\caption{Left: lattice for the proof of Proposition \ref{NormalityProp}.\ref{NormalityProp2}. Right: an ortholattice in which the Sasaki hook violates normality for Proposition \ref{NormalityProp}.\ref{NormalityProp4}.}\label{NonnormalFig}
\end{figure}

Though the conditionals in Proposition \ref{NormalityProp} are normal if they satisfy modus ponens, this is not the case for preconditionals in general.\footnote{Note that preconditionals satisfying normality but not modus ponens can be obtained from the examples in Footnotes \ref{NormalProto} and \ref{NormalSasaki}.} An instructive example comes from the following probabilistic interpretation. Given $W=\{0,\dots,10\}$, define for each $w\in W$ a measure $\mu_w \colon \wp(W)\to [0,1]$ by $\mu_w(\{w\})=.9$, $\mu_w(\{v\})=.01$ for $v\neq w$, and $\mu_w(A)=\sum_{v\in A}\mu_w(\{v\})$ for non-singleton $A\subseteq W$. Then for $A,B\subseteq W$ with $A\neq \varnothing$, let \[A\to B=\{w\in W: \mu_w(B\mid A)\geq .9\},\]
where as usual $\mu_w(B\mid A)=\mu_w(A\cap B)/\mu_w(A)$, and $\varnothing\to B=W$.

\begin{prop} The operation $\to$ just defined is a preconditional on $\wp(W)$ satisfying modus ponens but not normality.
\end{prop}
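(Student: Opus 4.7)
The plan is to exploit the gap in each measure $\mu_w$: for any $A \subseteq W$, we have $\mu_w(A) \geq 0.9$ iff $w \in A$, and otherwise $\mu_w(A) \leq 10 \cdot 0.01 = 0.1$. Moreover, when $w \notin A$, the conditional probability $\mu_w(X \mid A)$ reduces to the purely combinatorial quantity $|A \cap X|/|A|$, independent of $w$. These two facts handle most of the verification.

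Axioms 1 and 2 fall out of the gap: if $w \in W \to A$ then $\mu_w(A) \geq 0.9$ forces $w \in A$; if $w \in A \cap B$ then $\mu_w(A \cap B) \geq 0.9$ while $\mu_w(A) \leq 1$, so $\mu_w(B \mid A) \geq 0.9$. Axiom 3 is trivial because $A \cap (A \cap B) = A \cap B$, and axiom 4 uses $B \cap C \subseteq B$ and monotonicity. The main work is axiom 5. Set $D = (A \cap B) \to C$ and assume $w \in A \to D$. If $w \in A$, then $\mu_w(A \cap D) \geq 0.9\,\mu_w(A) \geq 0.81$, and the gap forces $w \in D$. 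If $w \notin A$ (so $w \notin A \cap B$), the assumption reads $|A \cap D|/|A| \geq 0.9$ and we need $|A \cap B \cap C|/|A \cap B| \geq 0.9$. The key observation is that for $v \in A \cap B$ one has $v \in D$ iff $v \in C$ (by the gap applied to $\mu_v$), while for $v \notin A \cap B$ membership in $D$ reduces to the $v$-independent inequality $|A \cap B \cap C|/|A \cap B| \geq 0.9$. If that inequality holds we are done; if it fails then $D = A \cap B \cap C$, so $|A \cap D|/|A| = |A \cap B \cap C|/|A| \leq |A \cap B \cap C|/|A \cap B| < 0.9$, contradicting the assumption.

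Modus ponens is again immediate from the gap: $w \in A \cap (A \to B)$ gives $\mu_w(A \cap B) \geq 0.81$, so $w \in B$. For the failure of normality, take $w = 0$, $A = \{1, \ldots, 10\}$, $B = \{1, \ldots, 9\}$, $C = \{2, \ldots, 10\}$. The combinatorial formula gives $\mu_0(B \mid A) = \mu_0(C \mid A) = 9/10 = 0.9$, so $w \in (A \to B) \cap (A \to C)$; but $B \cap C = \{2, \ldots, 9\}$ yields $\mu_0(B \cap C \mid A) = 8/10 < 0.9$, so $w \notin A \to (B \cap C)$. The only real obstacle in the whole argument is the case analysis for axiom 5 when $w \notin A$.
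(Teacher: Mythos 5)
Your proof is correct and follows essentially the same strategy as the paper's: the same gap observation ($\mu_w(S)\geq .9$ iff $w\in S$), the same split on $w\in A$ versus $w\notin A$ for axiom 5 culminating in $(A\cap B)\to C = A\cap B\cap C$ in the bad case, and the identical normality counterexample. Your treatment of the hard subcase is marginally cleaner than the paper's, which uses an explicit cardinality estimate ($|A\cap B|\leq 9$, hence a ratio of at most $.08/.09<.9$), whereas you simply note that $\mu_v(\cdot\mid A\cap B)$ is the same for every $v\notin A\cap B$, so $w\notin (A\cap B)\to C$ immediately excludes all points outside $A\cap B$ from $(A\cap B)\to C$.
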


\begin{proof} First observe that $A\cap (A\to B)\subseteq B$, because if $w\in A$, then since $\mu_w(\{w\})=.9$, we can have $\mu_w(B\mid A)\geq .9$ only if $w\in B$. This also shows that axiom \ref{ax1} of preconditionals holds. For  axiom \ref{ax2}, if ${w\in A\cap B}$, then again since $\mu_w(\{w\})=.9$, we have $\mu_w(B\mid A)\geq .9$, so $w\in A\to B$. Axioms \ref{ax3} and \ref{ax4} also clearly hold. For axiom \ref{ax5}, suppose $w\in {A\to ((A\cap B)\to C)}$, so $\mu_w((A\cap B)\to C \mid A)\geq .9$. If $w\in A$, then by modus ponens, we have $w\in (A\cap B)\to C$, as desired. So suppose $w\not\in A$. Further suppose for contradiction that $w\not\in (A\cap B)\to C$, i.e., $\mu_w(A\cap B\cap C)/ {\mu_w(A\cap B)} < .9$, so $|A\cap B\cap C|<|A\cap B|$. We claim that ${(A\cap B)\to C}\subseteq {A\cap B}$. Consider any $x\in W\setminus ( A\cap B)$ with $x\neq w$. Since $x\not\in A\cap B$, $w\not\in A\cap B$, $x\neq w$, and $|W|=11$, we have $|A\cap B|\leq 9$, which with ${|A\cap B\cap C|}<{|A\cap B|}$ implies ${\mu_x(C\cap A\cap B)}/ \mu_x(A\cap B) \leq {.08/.09 <.9}$, so ${x\not \in {(A\cap B)\to C}}$. Thus,  $(A\cap B)\to C\subseteq A\cap B$, which implies ${(A\cap B)\to C}\subseteq {A\cap B\cap C}$ by modus ponens, which with $\mu_w((A\cap B)\to C \mid A)\geq .9$ implies ${\mu_w(A\cap B\cap C \mid A)}\geq .9$, which in turn implies $\mu_w(C \mid A \cap B)\geq .9$ and hence $w\in (A\cap B)\to C$. 

Finally, for the failure of normality, $0\in \{1,\dots, 10\} \to \{1,\dots,9\}$ and $0\in \{1,\dots, 10\} \to \{2,\dots,10\}$, but $0\not\in \{1,\dots, 10\} \to \{2,\dots,9\}$.\end{proof}

\section{Relational representation of lattices with~\mbox{preconditionals}}\label{Representation}

Having hopefully shown the interest of the class of preconditionals from an axiomatic perspective, let us return to the semantic origin of preconditionals. Given a set $X$, binary relation $\comp$ on $X$ (let $\compflip := \comp^{-1}$), and $A,B\subseteq X$, define\footnote{In \citealt{Holliday2023}, we denoted the operation defined in (\ref{Semantics}) by  `$\twoheadrightarrow_\comp$'  in order to distinguish it from a different  operation denoted by `$\to_\comp$'. Since we do not need that distinction here, we will use the cleaner `$\to_\comp$' for the operation defined in (\ref{Semantics}).}
\begin{equation}A\to_\comp B= \{x\in X\mid \forall y\comp x\; (y\in A\Rightarrow \exists z\compflip y: z\in A\cap B)\},\label{Semantics}\end{equation}
As shown in \citealt{Holliday2023}, the operation $c_\comp$ defined by $c_\comp(A)=X\to_\comp A$ is a closure operator, so its fixpoints ordered by inclusion form a complete lattice $\mathfrak{L}(X,\comp)$ with meet as intersection and join as $\bigvee_\comp \{A_i\mid i\in I\}=c_\comp(\bigcup \{A_i\mid i\in I\})$. If $A,B\in \mathfrak{L}(X,\comp)$, then  $A\to_\comp B \in \mathfrak{L}(X,\comp)$,\footnote{Obviously $A\to_\comp B\subseteq X\to_\comp(A\to_\comp B)$. For the reverse, suppose $x\not\in A\to_\comp B$. Hence there is some $y\comp x$ such that $y\in A$ and for all $z\compflip y$, we have $z\not\in A\cap B$. It follows that there is no $w\compflip y$ with $w\in A \to_\comp B$. Thus, if $x\not\in A\to_\comp B$, then there is a $y\comp x$ such that for all $w\compflip y$, $w\not\in A\to_\comp B$, which shows that $x\not\in X\to_\comp(A\to_\comp B)$.} so we may regard $\to_\comp$ as an operation on $\mathfrak{L}(X,\comp)$. The following is easy~to~check.

\begin{fact}\label{PrecompFromRel} The operation $\to_\comp$ on $\mathfrak{L}(X,\comp)$ is a preconditional.
\end{fact}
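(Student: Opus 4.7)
The plan is to verify each of the five preconditional axioms directly from the definition (\ref{Semantics}), using two standing facts: (i) every $A \in \mathfrak{L}(X, \comp)$ is a fixpoint of $c_\comp$, i.e., $A = X \to_\comp A$; and (ii) $X$ itself is a fixpoint (since $c_\comp$ is a closure operator, extensivity forces $X \subseteq c_\comp(X) \subseteq X$), so the top element $1$ of $\mathfrak{L}(X, \comp)$ is simply $X$.

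Axiom 1 then collapses to an equality: $1 \to_\comp A = X \to_\comp A = c_\comp(A) = A$, hence $1 \to_\comp A \subseteq A$. Axioms 2, 3, and 4 are essentially one-line unfoldings of (\ref{Semantics}). For axiom 2, given $x \in A \cap B$ and any $y \comp x$ with $y \in A$, the element $z := x$ witnesses both $y \comp z$ (i.e., $z \compflip y$) and $z \in A \cap B$, so $x \in A \to_\comp B$. For axiom 3, any witness $z \in A \cap B$ produced by $x \in A \to_\comp B$ is automatically a witness for $z \in A \cap (A \cap B)$, giving $x \in A \to_\comp (A \cap B)$; axiom 4 is symmetric, since a witness $z \in A \cap (B \cap C)$ lies in particular in $A \cap B$.

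The only case requiring a chained argument is axiom 5. Assume $x \in A \to_\comp ((A \cap B) \to_\comp C)$ and fix $y \comp x$ with $y \in A \cap B$; we aim to produce $z$ with $y \comp z$ and $z \in (A \cap B) \cap C$. Since $y \in A$, the outer implication at $x$ supplies some $w$ with $y \comp w$ and $w \in (A \cap B) \to_\comp C$. Reusing the very same $y$ as the ``predecessor'' in the inner implication at $w$---legitimate because $y \comp w$ is precisely what the outer step delivered, and $y \in A \cap B$---we obtain the desired $z$, establishing $x \in (A \cap B) \to_\comp C$. I do not anticipate any substantive obstacle; the work is entirely in matching quantifiers, with axiom 5 being the only place where the definition has to be applied twice in sequence.
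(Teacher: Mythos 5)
Your verification is correct and is exactly the direct check that the paper leaves to the reader (the fact is stated with ``the following is easy to check'' and no proof is given). The only delicate point is axiom 5, where one must notice that the witness $w$ supplied by the outer conditional satisfies $y \comp w$, so the same $y$ can serve as the predecessor when unfolding $w \in (A\cap B)\to_\comp C$ --- and you handle that correctly.
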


\begin{exmp} In the relational frame shown at the top of Figure \ref{RelFrame}, the arrow from $y$ to $x$ indicates $x\comp y$, etc. Reflexive loops are assumed but not shown. Transitive arrows are \textit{not} assumed. Note that $y\in G$ but $y\not\in B\to_\comp G$, contra weak monotonicity. Also note that $y\in (P\to_\comp 0)\to_\comp 0= 0\to_\comp 0=1$, but $y\not\in P$, contra involution. So $\to_\comp$ is neither Heyting nor Sasaki.

\begin{figure}[h]
\begin{center}
\begin{minipage}{1.5in}
\begin{tikzpicture}[->,>=stealth',shorten >=1pt,shorten <=1pt, auto,node
distance=2cm,thick,every loop/.style={<-,shorten <=1pt}]
\tikzstyle{every state}=[fill=gray!20,draw=none,text=black]
\node[label=center:$x$,inner sep=0pt,minimum size=.175cm] at (0,0) (A) {}; 
\node[label=center:$y$,inner sep=0pt,minimum size=.175cm] at (0,1.5) (B) {}; 
\node[label=center:$w$,inner sep=0pt,minimum size=.175cm] at (1.5,1.5) (C) {}; 
\node[label=center:$z$,inner sep=0pt,minimum size=.175cm] at (1.5,0) (D) {}; 

\path[{Triangle[open]}-,draw,thick] (A) to node {{}}  (B);
\path[{Triangle[open]}-{Triangle[open]},draw,thick] (B) to node {{}}  (C);
\path[-{Triangle[open]},draw,thick] (C) to node {{}}  (D);

\path[-, draw=red, opacity=1, thick, rounded corners]  (0, .4) -- (.4, .4) -- (.4, -.4) -- (-.4, -.4) -- (-.4, .4) -- (.4, .4) -- (0, .4); 

\path[-, draw=darkgreen, opacity=1, thick, rounded corners]  (0, 2) -- (.5, 2) -- (.5, -.5) -- (-.5, -.5) -- (-.5, 2) -- (.5, 2) -- (0, 2); 

\path[-, draw=blue, opacity=1, thick, rounded corners]  (1.5, 2) -- (2, 2) -- (2, -.5) -- (1, -.5) -- (1, 2) -- (2, 2) -- (1.5, 2); 

\path[-, draw=orange, opacity=1, thick, rounded corners]  (1.5, .4) -- (1.9, .4) -- (1.9, -.4) -- (1.1, -.4) -- (1.1, .4) -- (1.9, .4) -- (1.5, .4); 

\path[-, draw=purple, opacity=.75, thick, rounded corners]  (1.5, .5) -- (2.1, .5) -- (2.1, -.6) -- (-.6, -.6) -- (-.6, .5) -- (1.9, .5) -- (1.5, .5); 

\end{tikzpicture}\end{minipage}\begin{minipage}{1.25in}\begin{tabular}{l}
$R=\{x\}$ \\
$O=\{z\}$ \\
$G=\{x,y\}$ \\
$B =\{w,z\}$ \\
$P=\{x,z\}$ 
\end{tabular}\end{minipage}\end{center} 

\begin{minipage}{1.75in} \begin{tikzpicture}[->,>=stealth',shorten >=1pt,shorten <=1pt, auto,node
distance=2cm,thick,every loop/.style={<-,shorten <=1pt}]
\tikzstyle{every state}=[fill=gray!20,draw=none,text=black]
\node[circle,draw=black!100, label=below:,inner sep=0pt,minimum size=.175cm] (0) at (0,0) {{}};
\node[circle,draw=black!100,fill=red!100, label=left:$R$,inner sep=0pt,minimum size=.175cm] (a) at (-.75,.75) {{}};
\node[circle,draw=black!100,fill=orange!100, label=right:$O$,inner sep=0pt,minimum size=.175cm] (b) at (.75,.75) {{}};
\node[circle,draw=black!100,fill=darkgreen!100, label=left:$G$,inner sep=0pt,minimum size=.175cm] (1l) at (-1.25,1.5) {{}};
\node[circle,draw=black!100,fill=blue!100, label=right:$B$,inner sep=0pt,minimum size=.175cm] (1r) at (1.25,1.5) {{}};
\node[circle,draw=black!100, label=above:,inner sep=0pt,minimum size=.175cm] (new1) at (0,2.5) {{}};

\node[circle,draw=black!100,fill=purple!75, label=below:$P$,inner sep=0pt,minimum size=.175cm] (c) at (0,1.5) {{}};

\path (new1) edge[-] node {{}} (1l);
\path (new1) edge[-] node {{}} (1r);
\path (1l) edge[-] node {{}} (a);
\path (1r) edge[-] node {{}} (b);
\path (a) edge[-] node {{}} (0);
\path (b) edge[-] node {{}} (0);

\path (a) edge[-] node {{}} (c);
\path (b) edge[-] node {{}} (c);
\path (c) edge[-] node {{}} (new1);

\end{tikzpicture}\end{minipage}\begin{minipage}{1.5in}{\footnotesize \begin{tabular}{cccccccccc} 
$\to_\comp$ & $0$ & $R$ & $O$ & $G$ & $P$ & $B$ & $1$ \\
\hline 
$0$ & $1$ & $1$ & $1$ & $1$ & $1$ & $1$ & $1$ \\
$R$ & $B$ & $1$ & $B$ & $1$ & $1$ & $B$ & $1$ \\
$O$ & $G$ & $G$ & $1$ & $G$ & $1$ & $1$ & $1$ \\
$G$ & $O$ & $P$ & $O$ & $1$ & $P$ & $O$ & $1$ \\
$P$ & $0$ & $G$ & $B$ & $G$ & $1$ & $B$ & $1$ \\
$B$ & $R$ & $R$ & $P$ & $R$ & $P$ & $1$ & $1$ \\
$1$ & $0$ & $R$ & $O$ & $G$ & $P$ & $B$ & $1$
\end{tabular}}\end{minipage}
\caption{A relational frame (top) giving rise to a lattice with preconditional (bottom).}\label{RelFrame}
\end{figure}

\end{exmp}

In fact, every complete lattice with a preconditional can be represented by such a frame $(X,\comp)$. More generally, we have the following.

\begin{thm}[Holliday 2023, Theorem~6.3]\label{CombImp}
 Let $L$ be a bounded lattice and $\to $ a preconditional on $L$. Then where
\begin{eqnarray*}
P=\{(x,x\to y)\mid x,y\in L\}\mbox{ and }(a, b)\comp (c, d)\mbox{ if }c\not\leq b,
\end{eqnarray*}
there is a complete embedding of $(L,\to)$ into $(\lat(P,\comp),\to_\comp)$, which is an isomorphism if $L$ is complete.\end{thm}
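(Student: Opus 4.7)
The plan is to construct an explicit map $\varphi \colon L \to \lat(P,\comp)$ sending each $a \in L$ to a suitable closed set of pairs in $P$, and then to verify in sequence that $\varphi$ is well-defined into $\lat(P,\comp)$, preserves the bounded-lattice structure and the preconditional, is injective, and is surjective when $L$ is complete. The natural candidate is a set of the form $\varphi(a) = \{(x,b) \in P : a \text{ bears some relation to } x \text{ and } b\}$, chosen so that intersections match meets in $L$ and so that some pair of the form $(a, a\to b)$ or $(1, 1\to b)$ lies in $\varphi(a) \setminus \varphi(b)$ whenever $a \not\leq b$. First I would verify that each $\varphi(a)$ is a fixpoint of $c_\comp$ using the definition of $\comp$ together with the preconditional axioms; preservation of meets should then fall out of the form of the defining condition, preservation of top and bottom from axiom~1 and the structure of $P$, and preservation of joins is automatic since $\vee_\comp$ is defined as $c_\comp$ of the union.

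The hard part will be showing $\varphi(a\to b) = \varphi(a) \to_\comp \varphi(b)$. The definition $(a,b) \comp (c,d)$ iff $c \not\leq b$ is engineered precisely to match the preconditional axioms: the failure of $c \leq b$ is what allows a second-coordinate ``conditional value'' to witness a violation of a conditional at a point, and the choice of $P$ as $\{(x, x\to y) \mid x,y \in L\}$ guarantees that enough such witnesses exist. One inclusion should come out of axioms 1--4 together with the structure of $P$, while the reverse inclusion will hinge on axiom~5 (the iteration axiom $a\to ((a\wedge b)\to c) \leq (a\wedge b)\to c$), which is the algebraic shadow of the nested quantifier structure in the definition of $\to_\comp$. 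Unpacking the semantics of $\to_\comp$ at a pair $(x,x\to y)$ and chasing the implications back through the preconditional axioms should land exactly on an instance of axiom~5.

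For injectivity, given $a \not\leq b$ in $L$, axiom~1 gives $1\to b \leq b$ and hence $a \not\leq 1\to b$, so the pair $(1, 1\to b) \in P$ should separate $\varphi(a)$ from $\varphi(b)$ once the defining condition is unpacked. For the final clause, I would assume $L$ is complete and construct an inverse $\psi \colon \lat(P,\comp) \to L$ sending each closed set $A$ to $\bigvee\{a \in L : \varphi(a) \subseteq A\}$, then use completeness to check $\varphi \circ \psi = \mathrm{id}$, with $\psi \circ \varphi = \mathrm{id}$ already coming from injectivity and the first-mentioned preservation properties. The role of completeness here is essential, since in general $\lat(P,\comp)$ will contain closed sets beyond the image of $\varphi$, and only a complete $L$ has enough joins to realize them all as $\varphi(a)$.
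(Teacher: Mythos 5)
The paper does not reprove this theorem (it is imported from Holliday 2023, Theorem 6.3), but it does prove the closely parallel Theorem \ref{EmbedThmImp}, and comparing your plan against that proof shows where the real work lies. Your outline never commits to a definition of $\varphi$; the one that works is $\varphi(a)=\{(x,x\to y)\in P\mid x\leq a\}$, the discrete analogue of $\widehat{a}=\{(F,I)\mid a\in F\}$, and once this is fixed several of your announced steps do not go through as stated. Your injectivity witness $(1,1\to b)$ fails: $(1,1\to b)$ lies in $\varphi(c)$ only when $c=1$, so it cannot separate $\varphi(a)$ from $\varphi(b)$; the correct witness for $a\not\leq b$ is a pair $(a,a\to y)$, which lies in $\varphi(a)\setminus\varphi(b)$ directly from the definition. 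Likewise, ``preservation of joins is automatic'' is backwards: $\varphi(a)\cap\varphi(b)=\varphi(a\wedge b)$ is the automatic half, whereas $\varphi(a\vee b)=\varphi(a)\vee_\comp\varphi(b)$ requires showing that $\varphi(a\vee b)$ is contained in every fixpoint of $c_\comp$ containing $\varphi(a)\cup\varphi(b)$, a separate density-style argument; and even verifying that $\varphi(a)$ is a fixpoint at all needs the observation $1\to a=a$ (axioms 1 and 2) to produce the pair $(1,1\to a)$ witnessing $(c,d)\notin c_\comp(\varphi(a))$ when $c\not\leq a$.

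The central gap is the identity $\varphi(a\to b)=\varphi(a)\to_\comp\varphi(b)$, which you defer to ``chasing the implications back through the preconditional axioms.'' The engine of that computation is a concrete lemma, proved at the start of the paper's proof of Theorem \ref{EmbedThmImp}: for any $(x,x\to y)\in P$, if $x\leq c$ and $c\wedge d\leq x\to y$, then $c\to d\leq x\to y$, via the chain $c\to d\leq c\to(c\wedge d)\leq c\to(x\to y)=c\to((x\wedge c)\to y)\leq(x\wedge c)\to y=x\to y$ using axioms 3, 4, and 5. This lemma is what drives the inclusion $\varphi(a\to b)\subseteq\varphi(a)\to_\comp\varphi(b)$: given $c\leq a\to b$, a pair $(x,x\to y)$ with $x\leq a$ and $c\not\leq x\to y$, the lemma shows $a\wedge b\not\leq x\to y$, so $(a\wedge b,(a\wedge b)\to 0)$ is the required $\compflip$-successor in $\varphi(a)\cap\varphi(b)$. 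The reverse inclusion uses only axiom 2, with $(a,a\to b)\comp(c,c\to d)$ as the refuting predecessor. So your assignment of axiom 5 to the ``reverse'' inclusion and axioms 1--4 to the other is roughly inverted, and without the displayed lemma the argument does not close. The surjectivity sketch for complete $L$ is plausible but rests on join preservation, which, as noted, is exactly the part you declared automatic.
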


Let us take this a step further with a topological representation. Important precedents for the non-conditional aspects of this representation can be found in \citealt{Urquhart1978}, \citealt{Allwein1993}, \citealt{Ploscica1995}, and \citealt{Craig2013}. Given a bounded lattice $L$ and a preconditional $\to$, define $\mathsf{FI}(L,\to)=(X,\comp)$ as follows: $X$ is the set of all pairs $(F,I)$ such that $F$ is a filter in $L$, $I$ is an ideal in $L$, and for all $a,b\in L$:
\[\mbox{if $a\in F$ and $a\wedge b\in I$, then $a\to b\in I$.}\]
Call such an $(F,I)$ a \textit{consonant} filter-ideal pair. We define  $(F,I)\comp (F',I')$ if $I\cap F'=\varnothing$. Finally, given $a\in L$, let $\widehat{a}=\{(F,I)\in X\mid a\in F\}$, and let $\mathsf{S}(L,\to)$ be $\mathsf{FI}(L,\to)$ endowed with the topology generated by $\{\widehat{a}\mid a\in L\}$ (cf.~\citealt{BH2020}).

\begin{thm}\label{EmbedThmImp} For any bounded lattice $L$ and preconditional $\to$ on $L$, the map $a\mapsto\widehat{a}$ is 
\begin{enumerate}
\item\label{EmbedThmImp1} an embedding  of $(L,\to)$ into $(\lat(\mathsf{FI}(L,\to)),\to_\comp)$ and 
\item\label{EmbedThmImp2} an isomorphism from $L$ to the subalgebra of $(\lat(\mathsf{FI}(L,\to )),\to_\comp)$ consisting of elements of $\lat(\mathsf{FI}(L,\to ))$ that are compact open in the space $\mathsf{S}(L,\to)$.
\end{enumerate}
\end{thm}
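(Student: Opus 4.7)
The plan is to adapt the standard topological representation of bounded lattices (in the spirit of Urquhart and others cited above) to the preconditional setting, tracking throughout the extra coherence condition that links $\to$ to the pairs $(F,I)$.

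The technical backbone is a separation lemma: whenever $a\not\leq b$ in $L$, there exists $(F,I)\in X$ with $a\in F$, $b\in I$, and $F\cap I=\varnothing$. I would establish this by Zorn's lemma on the poset of pairs $(F',I')$ in which $F'$ is a filter containing $a$, $I'$ is an ideal containing $b$, $F'\cap I'=\varnothing$, and the coherence condition defining $X$ holds. The delicate step is nonemptiness: starting from $F'=\mathord{\uparrow}a$, one iteratively closes $\{b\}$ under downsets, finite joins, and the rule ``$x\in F'$ and $x\wedge y\in I'$ imply $x\to y\in I'$'', and shows via the preconditional axioms (especially axioms 3 and 5) that $a$ is never pulled in. Directed unions preserve all four constraints, so Zorn produces a maximal pair in $X$.

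For part \ref{EmbedThmImp1}, injectivity of $a\mapsto\widehat{a}$ is immediate from separation. Meet preservation $\widehat{a\wedge b}=\widehat{a}\cap\widehat{b}$ is built into the filter property. To see that $\widehat{a}\in\lat(\mathsf{FI}(L,\to))$, if $a\notin F$ then a variant of the separation construction yields $(F',I')\in X$ with $a\in I'$ and $I'\cap F=\varnothing$; every $\comp$-successor of $(F',I')$ then fails to contain $a$ in its filter, witnessing $(F,I)\notin c_\comp(\widehat{a})$. Join preservation $\widehat{a\vee b}=c_\comp(\widehat{a}\cup\widehat{b})$ follows since $\widehat{a\vee b}$ is a fixpoint containing $\widehat{a}\cup\widehat{b}$, with the reverse inclusion again by separation. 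For preservation of $\to$, the inclusion $\widehat{a\to b}\subseteq\widehat{a}\to_\comp\widehat{b}$ unwinds directly from the coherence condition on $X$. The reverse $\widehat{a}\to_\comp\widehat{b}\subseteq\widehat{a\to b}$ reduces, when $a\to b\notin F$, to constructing $(F',I')\in X$ with $a\in F'$, $a\wedge b\in I'$, and $I'\cap F=\varnothing$: coherence then forces $a\to b\in I'$, consistent with $a\to b\notin F$, and a Zorn construction produces such a pair using axiom 5 to keep the ideal closure disjoint from $F$.

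For part \ref{EmbedThmImp2}, each $\widehat{a}$ is subbasic open. For compactness, if $\widehat{a}\subseteq\bigcup_i\widehat{b_i}$ has no finite subcover, then $a\not\leq b_{i_1}\vee\cdots\vee b_{i_k}$ for every finite subfamily (else separation contradicts the cover), so Zorn applied to pairs $(F',I')$ with $a\in F'$, all $b_i\in I'$, $F'\cap I'=\varnothing$, and coherence yields $(F,I)\in X$ with $a\in F$ and every $b_i\notin F$, contradicting coverage. Conversely, any compact open $C\in\lat(\mathsf{FI}(L,\to))$ is a finite set-theoretic union $\widehat{a_1}\cup\cdots\cup\widehat{a_n}$ of subbasic opens; since $C$ is a $c_\comp$-fixpoint containing each $\widehat{a_j}$, one has $C\supseteq c_\comp(\widehat{a_1}\cup\cdots\cup\widehat{a_n})=\widehat{a_1\vee\cdots\vee a_n}$ by the join preservation of part \ref{EmbedThmImp1}, while $C\subseteq\widehat{a_1\vee\cdots\vee a_n}$ since $a_j\leq a_1\vee\cdots\vee a_n$, giving $C=\widehat{a_1\vee\cdots\vee a_n}$. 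The main obstacle throughout is the separation lemma---specifically, keeping the iteratively generated ideal disjoint from the target filter under the coherence rule, where the preconditional axioms do the essential work.
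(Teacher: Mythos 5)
Your overall architecture---a Zorn-style separation lemma feeding injectivity, join preservation, $\to$-preservation, and compactness---is the standard template for lattice representations, but the separation lemma you lean on is false for preconditionals, and that is where the proposal breaks. Take the Benzene ring O6 with elements $0<p<q<1$ and $0<\neg q<\neg p<1$ (no other nontrivial comparabilities), equipped with the Sasaki hook $x\to y=\neg x\vee(x\wedge y)$, a preconditional by Corollary \ref{OrthoSasaki}. Then $q\not\leq p$, yet $q\to p=\neg q\vee(q\wedge p)=\neg q\vee p=1$. If $(F,I)\in X$ with $q\in F$ and $p\in I$, then since $q\wedge p=p\in I$ the coherence condition forces $q\to p=1\in I$, so $I=L$ and $F\cap I\neq\varnothing$. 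Hence no pair in $X$ separates $q$ from $p$ in your sense, and the ``nonemptiness'' step of your Zorn argument---where you assert that axioms 3 and 5 prevent the filter's generator from being pulled into the ideal closure---cannot be carried out. This infects every place you invoke separation (injectivity, the reverse inclusion for joins, compactness), and it also undermines the analogous Zorn constructions you invoke for $\widehat{a}$ being a $c_\comp$-fixpoint and for $\widehat{a}\to_\comp\widehat{b}\subseteq\widehat{a\to b}$.

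The repair, and the paper's actual route, is to abandon maximal disjoint pairs entirely and instead verify that the \emph{concrete} pairs $(\mathord{\uparrow}a,\mathord{\downarrow}a\to b)$ (hence also $(\mathord{\uparrow}1,\mathord{\downarrow}b)$, since $1\to b=b$) lie in $X$. That verification is the real content of the proof---the chain $c\to d\leq c\to(c\wedge d)\leq c\to(a\to b)=c\to((a\wedge c)\to b)\leq(a\wedge c)\to b=a\to b$ for $c\geq a$ and $c\wedge d\leq a\to b$, using axioms 3, 4, and 5---and it is exactly the computation your sketch gestures at but never performs. Once these pairs are available, no Zorn argument is needed: $(\mathord{\uparrow}a,\mathord{\downarrow}a\to b)$ is precisely the witness with $a$ in the filter, $a\wedge b$ in the ideal (by axiom 2), and ideal disjoint from any $F$ with $a\to b\notin F$ that you hoped to build by transfinite closure; $(\mathord{\uparrow}(a\wedge b),\mathord{\downarrow}(a\wedge b)\to 0)$ supplies the witness you omit in the forward inclusion $\widehat{a\to b}\subseteq\widehat{a}\to_\comp\widehat{b}$; injectivity needs only $b\notin\mathord{\uparrow}a$, not $b\in I$; and compactness of $\widehat{a}$ is immediate because any cover must catch the point $(\mathord{\uparrow}a,\mathord{\downarrow}a\to 0)$, whose basic neighborhood inside a cover element forces some generator $c\geq a$ and hence a one-element subcover. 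Your derivation of part \ref{EmbedThmImp2} from part \ref{EmbedThmImp1} (a compact open fixpoint is a finite union of sets $\widehat{c_j}$, which collapses to $\widehat{c_1\vee\dots\vee c_n}$) is fine once part \ref{EmbedThmImp1} is repaired along these lines.
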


\begin{proof} For $a\in L$, let $\mathord{\uparrow}a$ (resp.~$\mathord{\downarrow}a$) be the principal filter (resp.~ideal) generated by $a$. First we claim that for any $a,b\in L$, $(\mathord{\uparrow}a,\mathord{\downarrow}a\to b)\in X$. For suppose $c\in\mathord{\uparrow}a$ and $c\wedge d\in \mathord{\downarrow}a\to b$, so $a\leq c$ and $c\wedge d\leq a\to b$. Then \begin{eqnarray*}
c\to d &\leq & c\to (c\wedge d)\mbox{ by axiom \ref{ax3} of preconditionals}\\
&\leq& c\to (a\to b) \mbox{ by axiom \ref{ax4}, since $c\wedge d\leq a\to b$}\\
&=& c\to ((a\wedge c)\to b)\mbox{ since $a\leq c$}\\
&\leq& (a\wedge c)\to b \mbox{ by axiom \ref{ax5}}\\
&=&a\to b \mbox{ since $a\leq c$},\end{eqnarray*}
so $c\to d\in \mathord{\downarrow}a\to b$. Since  $b=1\to b$ by axioms \ref{ax1} and \ref{ax2} of preconditionals, it follows that $(\mathord{\uparrow}1,\mathord{\downarrow}b)= (\mathord{\uparrow}1,\mathord{\downarrow}1\to b)\in X$ as well. 

Using the above facts, the proof that $\widehat{a}$ belongs to $\lat(\mathsf{FI}(L,\to ))$ and that $a\mapsto\widehat{a}$ is injective and preserves $\wedge$ and $\vee$ is the same as in the proof of Theorem 4.30.1 in Holliday 2023. Also note that $\widehat{1}=X$ and $\widehat{0}=c_\comp (\varnothing)$.

Next we show that $\widehat{a\to b}=\widehat{a}\to_\comp \widehat{b}$. First suppose $(F,I)\in \widehat{a\to b}$, $(F',I')\comp (F,I)$, and $(F',I')\in\widehat{a}$, so $a\in F'$. Since $(F,I)\in \widehat{ a\to b}$, we have $a\to b\in F$, which with $(F',I')\comp (F,I)$ implies $ a\to b\not\in I'$, which with $a\in F'$ and the definition of $X$ implies $a\wedge b\not\in I'$. Now let $F''=\mathord{\uparrow}a\wedge b$ and $I''=\mathord{\downarrow} (a\wedge b)\to 0$. Then $(F'',I'')\in X$, $(F',I')\comp (F'',I'')$, and $(F'',I'')\in \widehat{a\wedge b}$. Thus, $(F,I)\in \widehat{a}\to_\comp\widehat{b}$. Conversely, if $(F,I)\not\in \widehat{a\to b}$, so $a\to b\not\in F$, then setting $(F',I')=(\mathord{\uparrow}a,\mathord{\downarrow}a\to b)$, we have $(F',I')\in X$ and $(F',I')\comp (F,I)$. Now consider any $(F'',I'')$ such that $(F',I')\comp (F'',I'')$, so $a\to b\not\in F''$. Then by axiom \ref{ax2} of preconditionals, $a\wedge b\not\in F''$, so $(F'',I'')\not\in\widehat{a\wedge b}$. Thus, $(F,I)\not\in \widehat{a}\to_\comp\widehat{b}$.

The proof of part \ref{EmbedThmImp2} is the same as the proof of Theorem 4.30.2 in \citealt{Holliday2023}.\end{proof}

Finally, we can characterize the spaces equipped with a relation $\comp$ that are isomorphic to $\mathsf{S}(L,\to)$ for some $(L,\to)$. Let $X$ be a topological space and $\comp$ a binary relation on $X$. Let $\mathsf{COFix}(X,\comp)$ be the set of all compact open sets  of $X$ that are also fixpoints of $c_\comp$. If $\mathsf{COFix}(X,\comp)$ is a lattice with meet as $\cap$ and join as $\vee_\comp$, $F$ is a filter in this lattice, and $I$ is an ideal, then we can speak of $(F,I)$ being a consonant filter-ideal pair as defined above Theorem \ref{EmbedThmImp}, using $\to_\comp$ in the definition. Given $x\in X$, let 
\begin{align*}
\mathsf{F}(x)&=\{U\in \mathsf{COFix}(X,\comp)\mid x\in U\} \\
\mathsf{I}(x)&=\{U\in \mathsf{COFix}(X,\comp)\mid \forall y\compflip x\;\, y\not\in U\}\end{align*} 
and note that $(\mathsf{F}(x),\mathsf{I}(x))$ is a consonant filter-ideal pair from $\mathsf{COFix}(X,\comp)$. For if $U\in \mathsf{F}(x)$ and $U\cap V\in \mathsf{I}(x)$, then $x\in U$ but for all $y\compflip x$, $y\not\in U\cap V$, which implies that for all $y\compflip x$, $y\not\in U\to_\comp V$, so $U\to_\comp V\in\mathsf{I}(x)$.

\begin{prop}\label{spaces} For any space $X$ and binary relation $\comp$ on $X$, there is a bounded lattice $L$ with preconditional $\to$ such that $(X,\comp)$ and $\mathsf{S}(L,\to)$ are homeomorphic as spaces and isomorphic as relational frames iff the following conditions hold for all $x,y\in X$: 
\begin{enumerate}
\item\label{spaces1} $x=y$ iff $(\mathsf{F}(x),\mathsf{I}(x))=(\mathsf{F}(y),\mathsf{I}(y))$;
\item\label{spaces2} $\mathsf{COFix}(X,\comp)$ contains $X$ and $c_\comp(\varnothing)$, is closed under $\cap$, $\vee_\comp$, and $\to_\comp $, and forms a basis for~$X$; 
\item\label{spaces3} each consonant filter-ideal pair from  $\mathsf{COFix}(X,\comp)$ is $(\mathsf{F}(x), \mathsf{I}(x))$ for some $x\in X$;
\item\label{spaces4} $x\comp y$ iff $\mathsf{I}(x)\cap \mathsf{F}(y)= \varnothing$.
\end{enumerate}
\end{prop}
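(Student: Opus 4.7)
The plan is to use Theorem~\ref{EmbedThmImp} as a template: conditions 1--4 encode precisely what is needed to invert the construction $\mathsf{S}(\cdot,\cdot)$. For the forward direction $(\Rightarrow)$, assume $(X,\comp)\cong\mathsf{S}(L,\to)$. Theorem~\ref{EmbedThmImp}.\ref{EmbedThmImp2} identifies $L$ with the compact open fixpoints of $\mathsf{S}(L,\to)$ via $a\mapsto\widehat{a}$, and a short computation at a point $(F,I)\in\mathsf{S}(L,\to)$ gives $\mathsf{F}((F,I))=\{\widehat{a}\mid a\in F\}$ and $\mathsf{I}((F,I))=\{\widehat{a}\mid a\in I\}$. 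Condition~1 then expresses that distinct elements of $\mathsf{FI}(L,\to)$ are distinct pairs; condition~3 expresses that every consonant filter-ideal pair on $\mathsf{COFix}$ corresponds to an element of $\mathsf{FI}(L,\to)$, which is immediate from the defining clause of $\mathsf{FI}(L,\to)$; condition~2 records that $\{\widehat{a}\mid a\in L\}$ is a basis closed under all three operations, from the definition of $\mathsf{S}(L,\to)$ together with Theorem~\ref{EmbedThmImp}; and condition~4 is the defining clause of $\comp$ on $\mathsf{FI}(L,\to)$.

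For the backward direction $(\Leftarrow)$, set $L=\mathsf{COFix}(X,\comp)$, with lattice operations $\cap$ and $\vee_\comp$ and with $\to$ being the restriction of $\to_\comp$ to $L$. By condition~2, $L$ is a bounded sublattice of $(\lat(X,\comp),\to_\comp)$ closed under $\to_\comp$, and since the preconditional axioms are inherited by subalgebras, $\to$ is a preconditional on $L$. Define $\varphi : X \to \mathsf{FI}(L,\to)$ by $\varphi(x)=(\mathsf{F}(x),\mathsf{I}(x))$; injectivity is condition~1, surjectivity is condition~3, and the biconditional relating $\comp$-data on $X$ and on $\mathsf{FI}(L,\to)$ is condition~4. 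For topology, each basic open of $\mathsf{S}(L,\to)$ has the form $\widehat{U}=\{(F,I)\mid U\in F\}$ for $U\in L$, and $\varphi^{-1}(\widehat{U})=\{x\in X\mid x\in U\}=U$; since $L$ is a basis of $X$ by condition~2, $\varphi$ is a homeomorphism.

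The main step requiring care is verifying that $\varphi$ is well-defined, i.e., that $(\mathsf{F}(x),\mathsf{I}(x))$ is a filter-ideal pair on $L$ satisfying the consonance clause of $\mathsf{FI}(L,\to)$. That $\mathsf{F}(x)$ is a filter is immediate. For closure of $\mathsf{I}(x)$ under $\vee_\comp$, observe that if $U,V\in\mathsf{I}(x)$ and a $y$ with $y\compflip x$ satisfied $y\in c_\comp(U\cup V)=U\vee_\comp V$, then taking $y'=x$ in the defining clause of $c_\comp$ (using $x\comp y$) would produce a $z$ with $x\comp z$ and $z\in U\cup V$, contradicting $U,V\in\mathsf{I}(x)$. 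The same ``take $y'=x$'' trick verifies consonance: if $U\in\mathsf{F}(x)$ (so $x\in U$) and $U\cap V\in\mathsf{I}(x)$, then for any $y$ with $x\comp y$, setting $y'=x$ in the definition of $\to_\comp$ witnesses $y\notin U\to_\comp V$, so $U\to_\comp V\in\mathsf{I}(x)$. With these verifications in place, the remaining translation between $X$ and $\mathsf{FI}(L,\to)$ is routine.
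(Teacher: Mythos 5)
The paper omits its own proof of this proposition (deferring to the analogous Proposition 3.21 of Holliday 2022), so there is nothing to compare line by line, but your argument is correct and is clearly the intended one: the two genuinely non-routine steps---verifying that $(\mathsf{F}(x),\mathsf{I}(x))$ is a consonant filter-ideal pair, via the instantiation $y'=x$ in the definitions of $c_\comp$ and $\to_\comp$---are handled properly, and the rest is the standard dictionary between points and filter-ideal pairs. The only places deserving a line more of care are the ``short computation'' that $\mathsf{I}((F,I))=\{\widehat{a}\mid a\in I\}$ (for $a\notin I$ one must exhibit a point $(F',I')$ with $a\in F'$ and $(F,I)\comp(F',I')$, e.g.\ $(\mathord{\uparrow}a,\mathord{\downarrow}(a\to b))$ from the proof of Theorem~\ref{EmbedThmImp}) and the tacit assumption that $\mathsf{COFix}(X,\comp)$ is bounded (equivalently, that $X$ is compact and $c_\comp(\varnothing)$ is compact open), which is needed for $L$ to be a bounded lattice carrying a preconditional but is also left implicit in the statement of the proposition itself.
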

The proof is very similar to that of Proposition 3.21 in \citealt{Holliday2022}, but we provide the adapted proof here for convenience.

\begin{proof} Suppose there is such an $L$. It suffices to show $\mathsf{S}(L,\to)$ satisfies conditions \ref{spaces1}--\ref{spaces4} in place of $(X,\comp)$. That condition \ref{spaces2} holds for  $\mathsf{COFix}(\mathsf{S}(L,\to))$ and $\mathsf{S}(L,\to)$ follows from the proof of Theorem \ref{EmbedThmImp}. Let $\varphi$ be the isomorphism $a\mapsto\widehat{a}$ from $L$ to $\mathsf{COFix}(\mathsf{S}(L,\to))$ in Theorem \ref{EmbedThmImp}, which induces a bijection $(F,I)\mapsto (\varphi[F],\varphi[I])$ between consonant filter-ideal pairs of $L$ and of $\mathsf{COFix}(\mathsf{S}(L,\to))$. Conditions \ref{spaces1}, \ref{spaces3}, and \ref{spaces4} follow from the fact that \begin{equation}\mbox{for any $x=(F,I)\in \mathsf{S}(L,\to)$, $(\varphi[F],\varphi[I])=(\mathsf{F}(x),\mathsf{I}(x))$.}\label{phieq}\end{equation} First, $\widehat{a}\in \varphi[F]$ iff $a\in F$ iff $x\in \widehat{a}$ iff $\widehat{a}\in \mathsf{F}(x)$. Second, $\widehat{a}\in \varphi[I]$ iff $a\in I$, and we claim that $a\in I$ iff $\widehat{a}\in \mathsf{I}(x)$, i.e., for all $(F',I')\compflip (F,I)$, $(F',I')\not\in \widehat{a}$, i.e., $a\not\in F'$. If $a\in I$ and $(F,I)\comp (F',I')$, then $a\not\in F'$ by definition of $\comp$. Conversely, if $a\not\in I$, let $F'=\mathord{\uparrow}a$ and $I'=\mathord{\downarrow}a\to 0$, so  $(F',I')$ is consonant by the proof of Theorem \ref{EmbedThmImp},  $(F,I)\comp (F',I')$ since $a\not\in I$, and $a\in F'$. Thus, $\widehat{a}\not\in\mathsf{I}(x)$. This completes the proof of (\ref{phieq}).

Now for condition \ref{spaces1}, given $x,y\in \mathsf{S}(L,\to)$ with $x=(F,I)$ and $y=(F',I')$, we have $(F,I)=(F',I')$ iff $(\varphi[F], \varphi[I]) = (\varphi[F'], \varphi[I'])$ iff  $(\mathsf{F}(x),\mathsf{I}(x))=(\mathsf{F}(y),\mathsf{I}(y))$; similarly, for condition \ref{spaces4}, $(F,I)\comp (F',I')$ iff $I\cap F'=\varnothing$ iff $\varphi[I]\cap \varphi[F']=\varnothing$ iff $\mathsf{I}(x)\cap \mathsf{F}(y)=\varnothing$. Finally, for condition \ref{spaces3}, if $(\mathcal{F},\mathcal{I})$ is a consonant filter-ideal pair from $\mathsf{COFix}(\mathsf{S}(L,\to))$, then setting $x=(\varphi^{-1}[\mathcal{F}],\varphi^{-1}[\mathcal{I}])$, we have $x\in \mathsf{S}(L,\to)$ and $(\mathcal{F},\mathcal{I}) =(\mathsf{F}(x), \mathsf{I}(x))$.

Assuming $X$ satisfies the conditions, $\mathsf{COFix}(X,\comp)$ is a bounded lattice with a preconditional by Fact \ref{PrecompFromRel}, and we define a map $\epsilon$ from $(X,\comp)$ to $\mathsf{S}(\mathsf{COFix}(X,\comp),\to_\comp)$  by $\epsilon(x)= (\mathsf{F}(x),\mathsf{I}(x))$. The proof that $\epsilon$ is a homeomorphism using conditions \ref{spaces1}--\ref{spaces3} is analogous to the proof of  Theorem~5.4(2) in \citealt{BH2020}. That $\epsilon$ preserves and reflects $\comp$ follows from condition \ref{spaces4}.
\end{proof}

\section{Conclusion}\label{Conclusion}

We have seen that preconditionals encompass several familiar classes of conditionals, including Heyting implication, the Sasaki hook, and Lewis-Stalnaker-style conditionals satisfying flattening. Lattices with these implications are therefore covered by the general representation in Theorem \ref{EmbedThmImp}. A natural next step is to try to obtain nice characterizations of the relational-topological duals of these kinds of algebras, as well as of more novel kinds---such as lattices with normal preconditionals---not to mention going beyond representation to categorical duality. We hope that the delineation of preconditionals and their relational-topological representation may help to provide a unified view of a vast landscape of conditionals arising in logic.

\bibliographystyle{apacite}



\end{document}